\documentclass[moor]{informs1}              

\usepackage{natbib}
 \NatBibNumeric
 \bibpunct[, ]{[}{]}{,}{n}{}{,}%
 
 \usepackage{graphicx}
 \usepackage{url,color}
 \usepackage{subfigure}
 \usepackage{amsfonts,mathrsfs}
 \usepackage{amssymb,amsmath}
 \usepackage{verbatim}
 \usepackage{acronym}
 \usepackage{multirow}
 \usepackage{array}
 \usepackage[utf8]{inputenc}
 \usepackage[english]{babel}
 \usepackage[mathscr]{euscript}
 \usepackage{bbold}
 \usepackage{bbm}
 
 \newcommand{\rset}{\mathbb{R}}
 \newcommand{\Hf}{H_{f,U_{k}}}
 \newcommand{\HfU}{H_{f,U}}
 \newcommand{\Hpsi}{H_{\psi, U_{k}}}
 
 \newcommand{\Hfmax}{H_{f,\max}}
 \newcommand{\Hpsimax}{H_{\psi,\max}}
 \newcommand{\HF}{H_{F,U_{k}}}
 \newcommand{\HFmax}{H_{F,\max}}
 
 \newcommand{\inas}{\overset{a.s.}{\to}}
 \newcommand{\red}{\textcolor{black}}

\usepackage[colorlinks=true,breaklinks=true,bookmarks=true,urlcolor=blue,
     citecolor=blue,linkcolor=blue,bookmarksopen=false,draft=false]{hyperref}

\def\EMAIL#1{\href{mailto:#1}{#1}}
\def\URL#1{\href{#1}{#1}}         

\TheoremsNumberedThrough     

\EquationsNumberedThrough    


\begin{document}

\TITLE{Efficiency of stochastic coordinate  proximal gradient  methods on nonseparable composite  optimization}

\ARTICLEAUTHORS{%
\AUTHOR{Ion  Necoara}
\AFF{Automatic Control and Systems
	Engineering Department, University Politehnica Bucharest, 060042
	Bucharest, Romania  and  Gheorghe Mihoc-Caius Iacob  Institute of Mathematical Statistics and Applied Mathematics of the Romanian Academy, 050711 Bucharest, Romania, \EMAIL{ion.necoara@upb.ro.} \URL{}}
\AUTHOR{Flavia Chorobura}
\AFF{Automatic Control and Systems
	Engineering Department, University Politehnica Bucharest, 060042
	Bucharest, Romania, \EMAIL{flavia.chorobura@stud.acs.upb.ro.} \URL{}}
} 

\ABSTRACT{%
This paper deals with composite  optimization problems having  the  objective function formed as the sum of two  terms, one has Lipschitz continuous gradient along random subspaces and may be nonconvex and the second  term is simple and differentiable, but  possibly nonconvex and \textit{nonseparable}.  Under these settings we design a stochastic  coordinate proximal gradient method which takes into account the nonseparable composite form of the objective function. This algorithm achieves scalability by constructing at each iteration a local approximation model of the whole nonseparable  objective function along  a random  subspace with user-determined dimension.  We outline efficient techniques for selecting the random subspace, yielding an implementation that has  low cost per-iteration  while also achieving fast convergence rates.  We present a probabilistic worst-case complexity analysis for our stochastic  coordinate proximal gradient method in convex and nonconvex settings, in particular we prove high-probability bounds on the number of iterations before a given optimality is achieved.  Extensive numerical results also  confirm the  efficiency of our  algorithm. 
}


\KEYWORDS{composite minimization, nonseparable objective, coordinate descent, convergence~rates.}
\MSCCLASS{90C25, 90C06, 65K05.}
\HISTORY{January 2023}

\maketitle


\section{Introduction.}\label{intro} 
\noindent This paper develops stochastic (block) coordinate  proximal gradient methods for solving composite optimization problems of the form: 
\begin{align}
	\label{eq:prob}
	F^* = \min_{x \in \mathbb{R}^n} F(x) := f(x) + \psi(x) ,
\end{align}
where $f: \mathbb{R}^n  \to \mathbb{R}$  is smooth (possibly  nonconvex)   and $\psi : \mathbb{R}^n  \to {\mathbb{R}}$ is simple and twice differentiable  (possibly nonseparable and  nonconvex).  Optimization problems having this composite structure
permit to handle general coupling functions   (e.g.,  $\psi(x) = \| A x\|^p$, with $A$ linear operator and $p \geq 2$) and arise in many applications such as  distributed control, signal  processing, truss topology design,  machine learning, traffic equilibrium problems, network flow problems and other areas \cite{Ber:99,LuFreNes:18,Mit:97,NecCli:13}.   In particular, this model is suitable for applications where for the second term, although differentiable, the computations of the gradient  or of the coordinatewise Lipschitz constants over a bounded set along a subspace are expensive or even impossible; on the other hand the computation of the prox for the second term  along a subspace is easy. Despite the nonsmooth property of the sum, such problems, both in convex and nonconvex cases, can be solved by full gradient or Newton methods with the efficiency typical for the smooth part of the objective \cite{Nes:18,NesPol:06}.  {However, for large-scale problems,  the usual methods based on full gradient/prox  and on Hessian  computations are prohibitive. In this case, a reasonable approach to solve such problems   is to use  (block) coordinate descent methods, see~\cite{Ber:99}.  }

\medskip 

\noindent \textit{State of the art}.  The variants of  (block) coordinate descent algorithms differ from each other in the way  we define the local approximation function over which we optimize  and the criterion of choosing at each iteration the subspace over which we minimize this local approximation function.  For updating one (block) variable, while keeping the other variables fixed,  two basic choices for the local approximation function  are usually considered: (i) exact approximation function, leading to \textit{coordinate minimization methods} \cite{Ber:99,GriSci:00},  and (ii)  quadratic approximation function, leading to \textit{coordinate (proximal) gradient}  \textit{descent   methods}  \cite{BecTet:13,FerRic:15,LuXia:14,Nes:10,NecCli:16,NecTak:20,RicTak:11,TseYun:09}.  Furthermore, three classical criteria for choosing the coordinate search  used often in these algorithms are the greedy, the cyclic and the random coordinate search, which significantly differs by the amount of computations required to choose the appropriate index. For cyclic coordinate search rates of convergence have been given only recently \cite{BecTet:13}. Convergence rates for coordinate descent methods based on the Gauss-Southwell rule  were given e.g.  in \cite{TseYun:09}. Another interesting approach is based on stochastic coordinate descent, where the coordinate search is random. Recent complexity results on stochastic coordinate descent methods were obtained in \cite{Nes:10}  for smooth functions and the extension to composite  functions  were given e.g., in \cite{FerRic:15,LuXia:14,NecCli:16,RicTak:11}.  However, these  papers studied optimization models where the second term, usually assumed nonsmooth, is separable,  i.e., $\psi(x) =\sum_{i=1}^n \psi_i(x_i)$, with $x_i$ is the $i$th  component of $x$.  In the sequel, we discuss papers that consider the case $\psi$  nonseparable and explain the main diferences with our present work. 

\medskip 

\noindent \textit{Previous work}.  From our knowledge there exist very few studies  on  coordinate descent methods when the second term of the composite objective function in  \eqref{eq:prob} is nonseparable. For example,  \cite{Nec:13,NecPat:14,NecTak:20,TseYun:09} considers  the composite optimization problem \eqref{eq:prob}  with the additional  linear constraints $Ax=b$.  Hence,   nonseparability comes from the linear constraints $Ax=b$, as $\psi$ is  assumed convex and  separable (possibly nonsmooth). Under these settings \cite{Nec:13,NecPat:14,NecTak:20,TseYun:09}  propose coordinate gradient descent methods where at the current feasible  point $x$ one needs to solve a subproblem over a  subspace generated by the matrix $U \in \mathbb{R}^{n \times p}$ using  a part of the gradient of $f$, $\nabla f(x)$, i.e.:
\[  \min_{d \in \mathbb{R}^p} f(x) + \langle  U^T \nabla f(x),d \rangle + \frac{1}{2} d^T H_U d  \quad \text{s.t.} \quad AUd=0,  \]
where $H_U$ is an appropriate positive definite  matrix  and then update $x^+ = x + Ud$.  The matrix $U$ is chosen according to some greedy rule or random. For this type of algorithms  sublinear rates are derived in the (non)convex case and linear convergence is obtained for  strongly convex objective.   Further, for general  $\psi$ (possibly nonseparable and nonsmooth)  \cite{GriIut:21,HanKon:18,LatThe:21} consider proximal coordinate descent  methods  of the form: 
\begin{equation*}
	\bar{x}^{+} \in  \text{prox}_{\alpha \psi } (\mathcal{C}(x - \alpha \nabla f(x))), 
\end{equation*}  
\noindent where $\mathcal{C}( \cdot)$   is a correction map  corresponding to the chosen random subspace at the current iteration in \cite{GriIut:21,HanKon:18} and is the identity map in \cite{LatThe:21}.  Moreover,  \cite{GriIut:21,HanKon:18} assume $\psi$ convex and update $x^{+} = \bar{x}^{+}$, while in \cite{LatThe:21} $\psi$ is possibly nonconvex and updates   $x_{i}^{+} = \bar{x}_{i}$  for all $i \in \mathcal{I}  \subseteq [n]$ and keeps the rest of the components  unchanged.  Note that  \cite{GriIut:21,HanKon:18}   use only  a sketch of  the gradient   $\nabla f(x)$ on the selected subspace, while in  \cite{LatThe:21}  $f$ is assumed separable.  { Other work for $\psi$ nonseparable is  \cite{FerBia:19}, where  structured problems are considered of the form: 
	\vspace*{-0.2cm}
	\begin{equation*}
		\min_{x \in \mathbb{R}^n} f(x) + \psi(x) + h(Ax),
		\vspace*{-0.2cm}
	\end{equation*}
	\noindent with $f$,$\psi$ and $h$ convex functions, such that 
	$\psi$ and $h$ are possible nonsmooth and  $f$ is smooth. For solving this problem, a randomized coordinate descent primal-dual algorithm is proposed where at each iteration  a block of components of the prox of the function    $\psi$,  $\text{prox}_{\alpha \psi }$, must be computed.  
	\red{ Since in these papers \cite{FerBia:19,GriIut:21,HanKon:18,LatThe:21},  one needs to compute at each iteration a block of components of the full prox of the nonseparable function $\psi$,  $\text{prox}_{\alpha \psi }$,  this can be can done efficiently  when this prox  can be evaluated easily given that only a block of coordinates are modified in the prior iteration.} }
For the  algorithms in \cite{FerBia:19,GriIut:21,HanKon:18}  linear convergence rate is derived, provided that the objective function  is  strongly convex. Linear convergence rate results were also obtained in \cite{LatThe:21}  when the objective function satisfies the Kurdyka-Lojasiewicz  condition.  Recently, \cite{AbeBec:21} considers  problem \eqref{eq:prob},  where the function  $f$ is assumed quadratic and convex,  while $\psi$ convex   function (possibly nonseparable and nonsmooth).  Under these settings,  \cite{AbeBec:21}  combines  the forward-backward envelope (to  smooth the original problem)  with an accelerated coordinate gradient descent method and derives sublinear rates for the proposed scheme. { This method also makes sense when the full prox can be computed efficiently under coordinate descent updates. \red{The main difference between  our work and  \cite{FerBia:19,GriIut:21,HanKon:18,LatThe:21,AbeBec:21} is that  we consider a prox along subspace, while in the other papers  one needs to compute a block of components of the full prox.} }	
 Recently,  in \cite{CarRob:21} a trust region algorithm is proposed based on iterative minimization within random subspaces for solving smooth nonconvex unconstrained problems, i.e. $\psi=0$.  Using a probabilistic argument from \cite{ChuLu:02},  paper  \cite{CarRob:21} derives a high-probability bound on the number of iterations such that the norm of the gradient is less than some given accuracy.  
\red{Recall that, for  $\psi$ nonseparable, coordinate descent methods may not converge (see e.g., the counterexamples in \cite{FriHasHofTib} for  nonseparable nondifferentiable convex problems   and in \cite{Wri:15} for nonseparable nonconvex problems, even in the differentiable case). These results motivate us to consider $\psi$ twice differentiable.}

\medskip 

\noindent \textit{Contributions}.  This paper deals with the  composite  problem \eqref{eq:prob} having  the  objective function formed as a sum of two  terms, one has Lipschitz continuous gradient along random subspaces and may be nonconvex   and the second  term is  simple and twice differentiable, but  possibly \textit{nonconvex} and \textit{nonseparable}.  Under these settings we design a stochastic  coordinate proximal gradient  method which takes into account the nonseparable composite form of the objective, i.e., at each iteration the method sketches  the gradient   $\nabla f(x)$  and computes the  prox of $\psi$ only along some subspace chosen in a stochastic manner\footnote{\red{To make the distinction between our approach and  random coordinate descent schemes, we use the term "stochastic",  since  in our method the subspace over which we minimize and the underlying probability distribution are general, while in random coordinate descent only blocks of coordinates are chosen based on a discrete probability distribution.}}.  Our algorithm achieves scalability by constructing at each iteration a local approximation model of the   objective  along  a random  subspace with user-determined dimension. {We outline efficient techniques for selecting the random subspace, yielding an implementation that has  low cost per-iteration   (linear in  problem dimension), while also achieving fast~convergence.  } 

\medskip 

\noindent The main challenge in analyzing the convergence of our scheme  is the fact that the cost cannot be used as a Lyapunov function,  unless some stochastic embedding of the gradient is imposed.    Another challenge arises  when analyzing convergence of  a stochastic algorithm under the local  Kurdyka-Lojasiewicz  (KL) condition.  We overcome this problem using Egorov's theorem, which allows  us to pass from almost sure convergence to uniform convergence.  Hence, combining our stochastic  embedding with  nontrivial probabilistic arguments, we present a probabilistic worst-case complexity analysis for our stochastic  coordinate proximal gradient method in convex and nonconvex settings,  {where in particular we prove high-probability bounds on the number of iterations before a given optimality is achieved.} The convergence rates in probability in both convex and nonconvex settings are summarized in the table below. In particular, sublinear  rates are derived for nonconvex and convex problems. Improved rates are given under the KL inequality (in the nonconvex case) and uniform convexity (in the convex case).    
\begin{small}
	\begin{center}
		\begin{tabular}{  |p{2.5cm}| p{4.5cm}| p{0.4cm}| p{4.5cm}| p{0.4cm}|}
			\hline
			\multirow{2}{*}{} & \multicolumn{2}{|c|}{convex} & \multicolumn{2}{|c|}{nonconvex}\\
			\cline{2-5}
			& \multicolumn{1}{|c|}{convergence rates} & \multicolumn{1}{|c|}{Theorem} & \multicolumn{1}{|c|}{convergence rates} & \multicolumn{1}{|c|}{Theorem}\\
			\hline
			sublinear  with high probability   &  \begin{small}{\(\displaystyle F(x_k) - F^* \!\leq\! \mathcal{O}\left(k^{-1}\right) \) } \end{small}  &  \ref{the:conv}  & \begin{small} $\min\limits_{i=1:k} \! \nabla F(x_{i}) \!\leq\! \mathcal{O} \! \left(\!k^{-\frac{1}{2}} \!\right)$ \end{small}  & \ref{theo1}  \\
			&   \begin{small} $F(x_k) \to F^*$ sublinearly depending on  uniform convexity parameter \end{small}  &   \ref{theo:KL1}   &   \begin{small} $F(x_k) \to F_*$ sublinearly depending on KL parameter \end{small} & \ref{theo:KL1} \\
			\hline
			& &  & &  \\
			linear with high probability  &  \begin{small}  $F(x_k) - F^* \!\leq\! \mathcal{O}\left( c^{k} \right)$ (strong convexity)  \end{small}  &  \ref{theo:KL1}  &     \begin{small}  $F(x_k) \to F_*$ linearly or superlinearly  depending on KL parameter  \end{small}  & \ref{theo:KL1} \\
			\hline
		\end{tabular}
	\end{center}
\end{small}

\medskip

\noindent Note that, our  convergence results are also of  interest when $f \equiv 0$ and $\psi$ nonseparable (in this case our algorithm can be viewed as a proximal regularization of a multi-block Gauss-Seidel method).  There are very few results ensuring that the iterates of a multi-block Gauss-Seidel method converges to a global minimizer, even for strictly convex functions, e.g., \cite{Ber:99,GriSci:00} present only asymptotic convergence results. An immediate byproduct  of our convergence analysis leads however  to novel convergence results for the popular multi-block Gauss-Seidel algorithm  in  both convex and nonconvex settings with very general sampling strategies.  An important property of our method in this case is that the convergence holds for arbitrarily small regularization parameters.   The numerical results also  confirm the  efficiency of our  algorithm.

\medskip 

\noindent \textit{Content}. The paper is organized as follows. In Section 2 we present some preliminary results,  while in Section 3 we define our problem of interest and a stochastic  coordinate proximal gradient algorithm.  The convergence rates in probability are derived in Section 4 for the nonconvex case and in Section 5 for the convex case. In the last section we provide detailed numerical simulations.


\section{Preliminaries}
\noindent  In this section we present our basic assumptions for composite problem  \eqref{eq:prob},   some definitions and  preliminary results.  We consider the following problem setting.  Let  $\mathbb{S}  = \{ \mathbb{S}_\omega \}_\omega $ be  family  of (linear) subspaces of $\mathbb{R}^{n}$. Intuitively, this set represents the directions that will be favored by the random descent.  We  assume that each subspace $\mathbb{S}_\omega$  is generated by the columns of a certain matrix $U_\omega \in \mathbb{R}^{n \times p}$ and denote the family of such  matrices by $\mathcal{U} = \{ U_\omega \}_\omega \subset \mathbb{R}^{n\times p}$.   For simplicity of the notation we consider the same dimension  $p$ for each subspace, although our results also hold for subspaces having different dimensions. Throughout the paper we assume that the set $\mathcal{U}$ is bounded (we can relax this assumption considering that this set is bounded with high probability).  Note that stochasticity enters in our algorithmic framework through a user-defined distribution on $\mathcal{U}$ describing the ensemble of random matrices $U  \in    \mathbb{R}^{n\times p}$, notation $U \sim \mathcal{U}$. The basic idea of our algorithmic framework consists of a given  $x \in \rset^n$, a sample  matrix $U \sim \mathcal{U}$ and a basic update of the form: 
$$x^+ = x+ Ud. $$ 

\noindent Throughout the paper the following assumptions will be valid:
\begin{assumption}
	\label{ass2}
	For the composite optimization problem \eqref{eq:prob} the following hold:
	\begin{itemize}
		\item []A.1: Given a family of matrices $\mathcal{U} \subset \mathbb{R}^{n\times p}$, then  for any $U \in \mathcal{U}$  the function  $f$ is $L_U$ smooth along $U$ (i.e., the gradient is Lipschitz continuous in the subspace generated by $U$): 
		\begin{align}
			\label{lip1}
			\|U^{T}(\nabla f(x + U h) - \nabla f(x)) \| \leq L_{U} \|h\| \quad  \forall h \in \mathbb{R}^{p}.
		\end{align}
		\item []A.2:  The function $\psi$ is  simple { (i.e., the proximal operator along the subspace generated by $U$ can be computed easily)}  and  twice continuously differentiable (possibly nonseparable and nonconvex). 
		\item[]A.3: A solution exists for \eqref{eq:prob}  (hence, the optimal value $F^* >- \infty$). 
	\end{itemize}
\end{assumption}	


\noindent An immediate consequence of Assumption \ref{ass2} [A.1] is the following \cite{Nes:18}: 
\begin{lemma}
	\label{lemma:1}
	If  Assumption \ref{ass2} [A1] holds, then we have the relation:
	\begin{align}
		\label{lip2}
		|f(x+U h) - f(x) - \langle U^{T} \nabla f(x), h \rangle| \leq \frac{L_{U}}{2} \|h\|^2 \quad \forall h \in  \mathbb{R}^{p}.
	\end{align}
\end{lemma}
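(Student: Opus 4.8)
The plan is to use the fundamental theorem of calculus along the one-dimensional segment $t \mapsto x + tUh$, which reduces the multivariate estimate to a single integral that can be bounded by Assumption \ref{ass2} [A.1] and the Cauchy--Schwarz inequality. This is the standard "descent lemma" argument, adapted so that only the restricted gradient $U^{T}\nabla f$ ever appears.

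Concretely, I would first fix $x \in \mathbb{R}^{n}$ and $h \in \mathbb{R}^{p}$ and introduce the auxiliary scalar function $g(t) = f(x + tUh)$ for $t \in [0,1]$. Since $f$ is differentiable, the chain rule gives $g'(t) = \langle \nabla f(x + tUh), Uh \rangle = \langle U^{T}\nabla f(x + tUh), h \rangle$. Integrating, $f(x+Uh) - f(x) = g(1) - g(0) = \int_{0}^{1} \langle U^{T}\nabla f(x+tUh), h \rangle \, dt$, and subtracting the linear term $\langle U^{T}\nabla f(x), h \rangle = \int_{0}^{1} \langle U^{T}\nabla f(x), h\rangle\,dt$ yields
\[
f(x+Uh) - f(x) - \langle U^{T}\nabla f(x), h\rangle = \int_{0}^{1} \langle U^{T}\bigl(\nabla f(x+tUh) - \nabla f(x)\bigr), h \rangle \, dt .
\]

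The remaining step is purely estimation: take absolute values, pass them inside the integral, and apply Cauchy--Schwarz to each integrand to get the bound $\int_{0}^{1} \|U^{T}(\nabla f(x+tUh) - \nabla f(x))\|\, \|h\|\, dt$. Now invoke Assumption \ref{ass2} [A.1] with the vector $th \in \mathbb{R}^{p}$ in place of $h$ (note $x + tUh = x + U(th)$), which bounds the first factor by $L_{U}\|th\| = t L_{U}\|h\|$. Hence the integral is at most $L_{U}\|h\|^{2}\int_{0}^{1} t\,dt = \tfrac{L_{U}}{2}\|h\|^{2}$, which is exactly \eqref{lip2}.

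I do not anticipate a genuine obstacle here; the only point requiring a moment's care is the rescaling in the application of A.1 (using $th$ rather than $h$ as the perturbation direction), together with the observation that $A.1$ is stated for \emph{all} $h \in \mathbb{R}^{p}$ so it applies along the entire segment. Differentiability of $g$ (hence applicability of the fundamental theorem of calculus) is immediate from smoothness of $f$ and linearity of $h \mapsto Uh$.
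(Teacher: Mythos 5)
Your proposal is correct and follows essentially the same route as the paper's proof: write the difference as $\int_{0}^{1}\langle U^{T}(\nabla f(x+tUh)-\nabla f(x)),h\rangle\,dt$, apply Cauchy--Schwarz, and invoke A.1 with perturbation $th$ to get the factor $tL_{U}\|h\|$ before integrating. The rescaling step you flag is exactly the one the paper uses, so there is nothing to add.
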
	
\proof{Proof}
	See Appendix  for a proof. \Halmos
\endproof

\medskip

\noindent Note that when  $f$ has gradient Lipschitz with global Lipschitz constant $L$, we also  have Assumption \ref{ass2} [A.1] valid with $L_U = L \|U\|^{2}$ since: 
\begin{eqnarray*}
	\|U^{T}(\nabla f(x + U h) - \nabla f(x)) \| \leq \|U\|\|\nabla f(x + U h) - \nabla f(x)\| 
	\leq L \|U\|^{2} \|h\|. 
\end{eqnarray*}

\noindent Another example when Assumption \ref{ass2} [A.1] holds is given in  next lemma \cite{Nes:18}: 
\begin{lemma}
	\label{lemma:2}
	If $f$ is twice differentiable, then  \eqref{lip1} holds  if and only if the following relation holds:
	\begin{equation}
		\|U^{T}\nabla^2 f(x) U \| \leq L_{U} \ \ \forall x \in \mathbb{R}^{n}. \label{eq:16}
	\end{equation} 
\end{lemma}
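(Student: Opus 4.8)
The plan is to prove Lemma~\ref{lemma:2} as a standard equivalence between a Lipschitz-type bound on a (restricted) gradient map and a uniform bound on the associated (restricted) Hessian, exploiting twice differentiability. First I would fix $U \in \mathcal{U}$ and define the auxiliary map $g : \mathbb{R}^p \to \mathbb{R}^p$, $g(h) = U^T \nabla f(x + Uh)$, whose Jacobian is exactly $U^T \nabla^2 f(x+Uh) U$ by the chain rule; then \eqref{lip1} says precisely that $g$ is $L_U$-Lipschitz on $\mathbb{R}^p$, and \eqref{eq:16} (after the obvious substitution $x \mapsto x + Uh$, which ranges over all of $\mathbb{R}^n$ as $h$ ranges over $\mathbb{R}^p$ only if $U$ is surjective, so more care is needed — see below) says the Jacobian of $g$ is bounded in operator norm.

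For the direction \eqref{eq:16} $\Rightarrow$ \eqref{lip1}, I would write, for arbitrary $h \in \mathbb{R}^p$ and $x \in \mathbb{R}^n$,
\begin{align*}
U^T\big(\nabla f(x+Uh) - \nabla f(x)\big) = \int_0^1 U^T \nabla^2 f(x + tUh)\, U h \; dt,
\end{align*}
take norms, pull the norm inside the integral, and bound $\|U^T \nabla^2 f(x+tUh) U\| \le L_U$ uniformly in $t$; since the point $x + tUh$ is again an arbitrary point of $\mathbb{R}^n$, the hypothesis \eqref{eq:16} applies and yields $\|U^T(\nabla f(x+Uh)-\nabla f(x))\| \le L_U \|h\|$, which is \eqref{lip1}. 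For the converse, \eqref{lip1} $\Rightarrow$ \eqref{eq:16}, I would fix $x$ and a direction $v \in \mathbb{R}^p$, divide the inequality $\|U^T(\nabla f(x + tUv) - \nabla f(x))\| \le L_U t\|v\|$ by $t > 0$, and let $t \downarrow 0$; the left-hand side converges to $\|U^T \nabla^2 f(x) U v\|$ by differentiability of $\nabla f$, giving $\|U^T \nabla^2 f(x) U v\| \le L_U \|v\|$ for all $v$, i.e. the operator-norm bound \eqref{eq:16}.

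The one subtlety I would be careful about is the quantifier matching: \eqref{lip1} is stated for all $x \in \mathbb{R}^n$ (the base point) and all $h \in \mathbb{R}^p$, and \eqref{eq:16} for all $x \in \mathbb{R}^n$, so in the first implication the points $x + tUh$ needed in the integral are covered by the ``for all $x$'' in \eqref{eq:16}, and in the second implication the limiting point $x$ is covered directly — so the equivalence is genuinely clean with no surjectivity hypothesis on $U$ required. I expect no real obstacle here; the main point to get right is simply invoking the mean-value / fundamental-theorem-of-calculus representation of the gradient increment and the definition of the second derivative as the limit of difference quotients of the first, and keeping the universal quantifiers aligned. A short remark that this mirrors the classical scalar fact ``$\nabla f$ is $L$-Lipschitz iff $\|\nabla^2 f\| \le L$'' would round out the proof, and I would defer any fully detailed computation to the Appendix in the same style as Lemma~\ref{lemma:1}.
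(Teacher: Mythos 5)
Your proposal is correct and follows essentially the same route as the paper's proof in the Appendix: the fundamental-theorem-of-calculus representation of $U^{T}(\nabla f(x+Uh)-\nabla f(x))$ with the uniform Hessian bound for one direction, and the limit of the normalized increment as $t\downarrow 0$ (the paper averages the integral over $[0,\alpha]$ and lets $\alpha\to 0$, which is the same computation) together with the definition of the operator norm for the converse. No gaps; your remark about quantifier alignment is also handled correctly.
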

\proof{Proof}
	See Appendix for a proof. \Halmos
\endproof

\medskip

\noindent In our convergence analysis we also use functions that   are uniform convex  \cite{Nes:19} or satisfy the Kurdyka-Lojasiewicz  property \cite{BolDan:07}. Note that these properties are usually defined for general functions (possibly nondifferentiable). Below, we adapt these definitions to the differentiable case, since in this paper we consider only differentiable objective functions.

\medskip

\begin{definition}
	A differentiable function $\psi$ is uniformly convex of order $r \geq 2$, if there exists $\sigma_r > 0$ such that the following relation holds:  
	\begin{align}
		\label{eq:unifConv}
		\psi(y) \geq \psi(x) + \langle \nabla \psi(x), y - x \rangle + \frac{\sigma_r}{r} \|y - x\|^{r} \quad \forall x, y \in \mathbb{R}^{n}. 
	\end{align}
\end{definition}

\noindent Note that for $r = 2$ in \eqref{eq:unifConv}  we recover  the usual  definition of  a strongly convex function.  One important class of uniformly convex functions is  $ \psi(x) = \frac{1}{r} \|  x - \bar{x}\|^{r}$,  
with $\bar{x}$ fixed,  see e.g.,  \cite{Nes:19}.  Minimizing both sides of  \eqref{eq:unifConv}, we also get:
\begin{align}
	\label{eq:probConv}
	\psi^{*} =\min _{y \in \rset^n} \psi(y) & \geq \min _{y \in \rset^n}\left\{\psi(x)+\left\langle \nabla \psi(x), y-x\right\rangle+\frac{\sigma_{r}}{r}\|y-x\|^{r}\right\} \nonumber\\
	&=\psi(x)-\frac{(r-1)}{r}\left(\frac{1}{\sigma_{r}}\right)^{\frac{1}{r-1}}\left\|\nabla \psi(x)\right\|_{*}^{\frac{r}{r-1}} \quad \forall x \in  \rset^n.  
\end{align}

\noindent For nonconvex functions the  usual notion replacing uniform convexity \eqref{eq:probConv} is the Kurdyka-Lojasiewicz  property, which captures a broad spectrum of the local geometry that a nonconvex function can have \cite{BolDan:07}. 

\medskip

\begin{definition}
	\label{def:kl}
	\noindent A differentiable  function $\psi$ satisfies the \textit{Kurdyka-Lojasiewicz (KL)} property on a given compact set $\mathcal{X}$ on which $\psi$ takes a constant value $\psi_*$ if there exist $\gamma, \epsilon >0$ such that   one~has:
	\begin{equation*}
		\kappa' (\psi(x) - \psi_*)\| \nabla \psi(x) \|  \geq 1  \quad   \forall x\!:  \text{dist}(x, \mathcal{X}) \leq \gamma, \;  \psi_* < \psi(x) < \psi_* + \epsilon,  
	\end{equation*}
	where $\kappa: [0,\epsilon] \to \mathbb{R}$ is a concave differentiable function satisfying $\kappa(0) = 0$ and its derivative $\kappa'>0$. 
\end{definition}  

\medskip

\noindent \red{ The KL property  holds for a large class of functions including semi-algebraic functions (e.g. real polynomial functions), vector or matrix (semi)norms (e.g., $\|\cdot\|_p$ with $p \geq 0$ rational number), logarithm functions,  exponential functions and  uniformly convex functions,  see \cite{BolDan:07} for a comprehensive list. In particular, if $\psi$ is semi-algebraic,  there  exist $q >1$ and $\sigma_q>0$ such that  $ \kappa$ in Definition \ref{def:kl} is of the form  $\kappa (t) = \sigma_q^{\frac{1}{q}} \frac{q}{q-1} t^{\frac{q-1}{q}}$  \cite{BolDan:07}. Then,  the KL property establishes the following local geometry of the nonconvex function $\psi$ around $\mathcal{X}$:
\begin{equation}
	\label{eq:kl}
	\psi(x) - \psi_*  \leq \sigma_q \|\nabla \psi(x)\|^q \quad   \forall x\!: \;  \text{dist}(x, \mathcal{X}) \leq \gamma, \; \psi_* < \psi(x) < \psi_* + \epsilon.  
\end{equation}
Moreover, the functions  $\psi(x) = - \ln(1-\|x\|^r)$ and $\psi(x) = \tan (\|x\|^r)$, with $r\geq 1$, satisfy KL with $q=\dfrac{r}{r-1}$ \cite{Wan:20}; any twice differentiable function $\psi:\mathbb{R}^{n} \to \mathbb{R}$ having the Hessian invertible at some stationary  point satisfies  KL  around that point with $q=2$ \cite{Wan:20}.} Note that the relevant aspect of the KL property is when $\mathcal{X}$ is a subset of \red{\textit{stationary points}  for $\psi$, i.e.,  $\mathcal{X} \subseteq \{x:\;  \nabla \psi (x)=0 \}$}, since it is easy to establish the KL property when $\mathcal{X}$ is not related to stationary points.  Another class of functions that  is important in this work is defined~next.


\medskip

\begin{definition}
	\label{definition:3}
A function $\psi:\mathbb{R}^{n}\to \mathbb{R}$ is \textit{convex along a given family of subspaces $\mathbb{S}$}  if for any fixed $x \in \rset^n$ and  any matrix $U \in \mathcal{U}$ generating a subspace from the family  $\mathbb{S}$ the function restricted on this subspace:	
	\begin{equation}
		\phi^{x}_{U}(d) = \psi(x + Ud) 
		\label{eq:phi}
	\end{equation}
is convex on its corresponding  domain.
\end{definition}

\medskip

\noindent  Note that if $\psi$ is twice differentiable, then it is convex along a given family of subspaces if $U^{T}\nabla^2 \psi(x) U$ is positive semidefinite matrix for any $x$ and $U$ in the family of matrices $\mathcal{U}$ generating $\mathbb{S}$. One can easily notice that there are nonconvex functions $\psi$ which are convex along a given family of subspaces.  Similarly, we can define weak convexity  for a function $\psi$ along a given family of subspaces $\mathbb{S}$. 


\section{A stochastic coordinate proximal gradient algorithm}
\noindent  For minimizing the composite optimization problem \eqref{eq:prob},   where $f$ and $\psi$ are assumed  nonseparable  functions (both possibly  nonconvex), we propose    a pure stochastic  coordinate descent algorithm that requires sketching  the gradient   $\nabla f(x)$ on some stochastic subspace and computing the  prox of $\psi$ also along this subspace.  Hence, our Stochastic Coordinate Proximal Gradient (SCPG) algorithm is as follows:

\medskip

\begin{center}
	\noindent\fbox{%
		\parbox{12cm}{%
			\textbf{Algorithm 1 (SCPG)}:\\
			Give $\mathcal{U} \subset \mathbb{R}^{n \times p}$ a family  of matrices and starting point $x_{0} \in \rset^n$. \\
			For $k \geq 0$ do: \\
			1. \; Randomly sample $U_{k} \sim \mathcal{U}$.  Choose $\eta_{U_{k}} >0$ and update: \\
			\begin{equation*}
				\Hf = \left\lbrace\begin{array}{ll} \dfrac{L_{U_{k}} + \eta_{U_{k}}}{2} \; 
					\text{ if } \psi \; \text{convex along  subspaces  from } \mathcal{U}, \\
					L_{U_{k}} + \eta_{U_{k}} \; \text{ otherwise.}   \label{eq:76}
				\end{array}\right.   
			\end{equation*}
			2. \;  Solve the following subproblem:
			\begin{align}
				\label{eq:subproblem}
				\;\; d_{k} \in  \arg \min_{d \in \mathbb{R}^{p}} f(x_{k}) +  \langle U^{T}_{k} \nabla f(x_{k}), d \rangle + \dfrac{\Hf}{2} \|d\|^2  + \psi(x_{k} + U_{k}d).	
			\end{align}
			3. \; 	Update $x_{k+1} = x_{k} + U_{k}d_{k}$.
	}}
\end{center}

\medskip

\noindent   Note that, when the matrices $U_k$ are chosen as the scaled sampling matrices, SCPG coincides with the random block coordinate descent method, while for $U_{k} = I_{n}$, SCPG recovers the \textit{full} proximal gradient method, see Algorithm (46) in \cite{Nes:19}. Moreover, for $f \equiv 0$  and $\Hf=\eta_{U_{k}} >0$ our algorithm yields  the popular \textit{alternating minimization} (Gauss-Seidel ) method with regularization \cite{Ber:99,GriSci:00}, but  with general sampling strategies:
\begin{align} 
	\label{eq:am}	
	d_k  \in  \arg \min_d \psi(x_k + U_k d) + \dfrac{\Hf}{2} \|d\|^2,  \;\;  x_{k+1} = x_{k} + U_{k}d_{k}.  
\end{align}	   
However,  \cite{Nes:19} derives rates only in the convex settings. Moreover,   there are very few results ensuring that the iterates of a Gauss-Seidel method converges to a global minimizer, even for strictly convex functions, e.g., \cite{GriSci:00} presents only asymptotic convergence results. In this paper we derive convergence rates for the general algorithm SCPG in both convex and nonconvex settings, with and without $f$.  An important fact concerning our approach is that the convergence of  SCPG  works for any  $\eta_{U_{k}} $ greater than a fixed positive parameter which can be chosen arbitrarily small. In particular, according to step 1 in  SCPG  we can choose a larger stepsize when $\psi$ is convex along subspaces (see Definition \ref{definition:3}). \\

\noindent Moreover, our algorithm  requires  knowledge of the coordinatewise Lipschitz constants of the smooth term $f$ and  computation of the  proximal operator of $\psi$ only   along some subspace (i.e.,  the prox of the  \textit{partial} function $ \phi_{U_{k}}^{x_{k}}$ from \eqref{eq:phi}   at the sketching gradient~$U^{T}_{k} \nabla f(x_{k})$):
\begin{align}  
	& \arg \min_{d \in \mathbb{R}^{p}}  \phi_{U_{k}}^{x_{k}}(d)  + \frac{\Hf}{2} \|d +  \Hf^{-1}  \; U^{T}_{k} \nabla f(x_{k}) \|^2   = \text{prox}_{ \Hf^{-1}  \phi_{U_{k}}^{x_{k}}}  \left( - \Hf^{-1} \;  U^{T}_{k} \nabla f(x_{k}) \right).  	  
	\label{prox3.2}
\end{align} 

\noindent  Regardless the properties of the two functions $f$ and $\psi$, the subproblem  \eqref{eq:subproblem} in SCPG is convex provided that $\psi$ is (weakly) convex along subspaces\footnote{When $\psi$ is weakly convex along subspaces the subproblem is convex provided that $\Hf$ is chosen appropriately.} and then the  prox operator 	\eqref{prox3.2}   is well-defined (and unique) in this case.  For general nonconvex $\psi$, the prox 	\eqref{prox3.2}  has to be interpreted as a point-to-set mapping (recall that under   Assumption \ref{ass2}  and $\inf_{d \in \mathbb{R}^p}  \phi_{U_{k}}^{x_{k}}(d) > -\infty$,  the set $ \text{prox}_{ \Hf^{-1}  \phi_{k}}(v) $  is nonempty and compact for any $v$).\\

\noindent Note that  the proximal mapping is available in closed form for many useful functions, e.g., for  norm power $p\geq 2$ (see Section 6 for more examples).  Note that the prox of $\psi$ restricted to some subspace  (as required in SCPG) is less expensive computationally than the full prox of $\psi$.
	More precisely,  if $\psi$ is differentiable,  then solving the subproblem e.g., in the full proximal gradient method (algorithm (46) in \cite{Nes:19}), is equivalent to finding a full vector $d_{k} \in \mathbb{R}^{n}$ satisfying the system of $n$ nonlinear equations:
	\begin{equation}
		\nabla f(x_{k}) + \nabla \psi(x_{k} + d_k) +H_f d_{k}  = 0. \label{systEq}
		\vspace*{-0.1cm}
	\end{equation}
	
	\noindent  On other hand,   at each iteration of  our algorithm SCPG,  solving the subproblem \eqref{eq:subproblem} is equivalent to finding a vector $d_{k} \in \mathbb{R}^{p}$ satisfying the system of $p \ll n$ nonlinear equations:
	\vspace*{-0.1cm}
	\begin{equation}
		U_{k}^{T}(\nabla f(x_{k}) + \nabla \psi(x_{k} + U_{k}d_k)) +\Hf d_{k}  = 0.   \label{oneEq}
		\vspace*{-0.1cm}
	\end{equation}
	\noindent  Clearly, the computational cost of solving the nonlinear system  \eqref{oneEq}  is less than the computational cost of  solving the full nonlinear system  \eqref{systEq}.  Throughout the paper, we also assume boundedness of the level set: 
$$\mathcal{L}_{F}(x_{0})=\left\{x:  \,F(x)\leq F(x_{0}) \right\}. $$ 

\noindent The main challenge when analyzing convergence of SCPG  consists in connecting $d_k$ with the full gradient $\nabla F(x_k)$.  If this is possible, then one can use the cost as a Lyapunov function for the iterates of SCPG, see Lemma 	\ref{lem1} below.   Hence, our key concept in the analysis of algorithm SCPG is related to the quality of our subspace selection, i.e., we consider  some stochastic embedding of the gradient. This  stochastic embedding of the gradient  allows us to establish the connection between  $d_k$ and $\nabla F(x_k)$ in Lemma 	\ref{lem2}.  {Hence, next definition introduces the notion of well-aligned matrix.}   

\medskip

\begin{definition}
	The matrix $U_{k} \in \mathbb{R}^{n \times p}$ is \textit{well-aligned} if 
	\begin{equation}
		\|U^{T}_{k} \nabla F(x_{k}) \| \geq \alpha \| \nabla F(x_{k}) \|, \label{eq:1}
	\end{equation}
	for some $\alpha \in (0,1)$ independent of the iteration $k$.
\end{definition}

\medskip

\noindent The notation of well-aligned iterations has been also used in  \cite{CarRob:21}  for derivative-free methods in nonlinear least-squares optimization. In our convergence analysis we use the set of well-aligned updates in $K$ iterations, with $K$ fixed, denoted $\mathbb{A}_{K}$, i.e.: 
\[   \mathbb{A}_{K} = \{k \in [K]:  \; U_{k} \; \text{is well-aligned} \}.   \] 

\noindent In the sequel we also consider  the following assumption related to well-alignment (subspace quality): 
\begin{assumption}
	\label{ass1}
	At each iteration $k$  our subspace selection $U_{k}$ is well-aligned for some fixed $\alpha \in (0,1)$ with probability at least $1-\delta$, with $\delta \in (0,1)$, independently of  the filtration $\mathcal{F}_{k}:=\{U_{0},\cdots,U_{k-1}\}$.
\end{assumption}

\medskip

\noindent  {Fortunately, the random matrices theory \cite{Mah:16} provides us  many examples of  classes of  matrices that are probabilistic well-aligned.  Below we discuss some classes of such matrices:  e.g., scaled  sampling  matrices, Johnson-Lindenstrauss matrices and others (see also  \cite{CarRob:21,Mah:16,Shao:22}).}

\subsection{Scaled Sampling matrices}
First, we consider the class of scaled sampling matrices $\mathcal{U}$, which have one  non-zero entry per row in any random column.  They are defined as follows.
	
\medskip
	 
\begin{definition}	Matrix 	$U\in \mathbb{R}^{n \times p}$ is a scaled sampling matrix if, independently for each $i \in [p]$, we sample  $j \in [n]$  uniformly at random and let $U_{ij} = \sqrt{\dfrac{n}{p}}$ and the rest of the entries are zero. 
\end{definition}

\medskip

\noindent  The scaling factor is included so that  $\mathbb{E} [ \|U^Tx\| ]  = \|x\|$. Scaled sampling matrices are computationally inexpensive to apply to vectors so that embeddings based on them can be computed efficiently. We have the following probabilistic result for this class, see e.g.,   Lemma 4.4.13 in \cite{Shao:22}. 
	Let $U \in \mathbb{R}^{n \times p}$ be a scaled sampling matrix. Let $\nu$ be the maximum nonuniformity of the gradient:
	\begin{equation*}
		\nu = \max_{x    } \left\{\dfrac{\|\nabla F(x)\|_{\infty}}{\|\nabla F(x)\|}  \right\}.
	\end{equation*}	
	Then, for any $\epsilon \in (0,1)$ and $\delta = e^{-\frac{\epsilon^2 p}{2n\nu^2}}$, we have: 
	\begin{equation*}
		\mathbb{P} [  \|U^T \nabla F(x)\|^{2}  \geq (1-\epsilon) \|  \nabla F(x) \|^2 ]  \geq 1 - \delta.
	\end{equation*}
	
	\noindent Note that  Assumption  \ref{ass1} holds if:  
	\begin{align*}
		&1-\epsilon \geq \alpha^2 \;  \Leftrightarrow \; 1 - \sqrt{\dfrac{2n\nu^2}{p} \log\dfrac{1}{\delta}} \geq \alpha^2 \; \Leftrightarrow \;  p \geq \dfrac{2n\nu^2}{\left(1-\alpha^2\right)^2} \log\dfrac{1}{\delta}. 
	\end{align*}
	\noindent Hence, $p$ depends on $\nu$. Note that we always have $1/n \leq \nu^2 \leq 1$. For well-behaved objective functions we  have $\nu^2  \simeq  1/n$ and then, in this case  $p$ is independent of the dimension  of the problem $n$. Otherwise, $p$ may depend on $n$.


\subsection{Johnson-Lindenstrauss embeddings}
We can also consider  for $\mathcal{U}$ the class of Johnson-Lindenstrauss embeddings, i.e.   $U \in \mathcal{U}$ is  a Johnson-Lindenstrauss transform (JLT). For these matrices we can improve the requirement on $p$. Recall that  a random matrix $U \in \mathbb{R}^{n \times p}$ is an $(1 - \alpha,\delta)$ JLT if for any vector $v  \in \mathbb{R}^{n}$, we have:  
\begin{equation*}
	\mathbb{P} \left[\alpha \|v\|^{2} \leq \|U^T v \|^{2}\leq (2 - \alpha)\| v \|^{2}\right] \geq 1 - \delta. 
\end{equation*}
Some common examples are:

\medskip 

\noindent (JLT-1):  If $U$ is a random Gaussian matrix with independent entries  normally distributed, $U_{ij} \sim \mathcal{N}(0,1/p)$,  and $p = \mathcal{O} \left( (1- \alpha)^{-2}|\log(\delta)| \right)$, then $U$ is an $(1 - \alpha,\delta)$ JLT (see Theorem 2.13 in \cite{BouLug:12}).

\medskip 

\noindent (JLT-2): We say that $U$ is an $s$ hashing matrix if it has exactly $s$ nonzero entries per row (indices sampled independently), which take values $\pm 1/\sqrt{s}$ selected independently with probability $1/2$. Then, any $s$-hashing matrix $U$ with $s=\mathcal{O}  \left( (1 - \alpha)^{-1}|\log(\delta)| \right)$ and $p = \mathcal{O} \left((1- \alpha)^{-2}|\log(\delta)| \right)$ is  $(1 - \alpha, \delta)$ JLT \cite{KanNel:14}.\\

\noindent Since  $\alpha^{2} \leq \alpha$ for all $\alpha \in (0,1)$, we have for any $U$ which is  $(1 - \alpha, \delta)$ JLT:
\begin{equation*}
	\mathbb{P} \left[\|U^T v \| \geq \alpha \|v \|\right] \geq 1 - \delta. 
\end{equation*}
\noindent  Then, for $v = \nabla F(x)$,  Assumption  \ref{ass1} holds if   $U$ is any $(1-\alpha,\delta)$ JLT.  Moreover, in the previous   examples  the dimension of the subspace $p$  satisfies: 
\begin{equation*}
	p \geq (1 -\alpha)^{-2}|\log(\delta)|. 
\end{equation*}

 \noindent Inequality above does not give a simple criterion for choosin $p$ in terms of $\alpha$ and $\delta$. In Figure 1 in \cite{KozBecDoo:21} we can see numerical evidence that $p$ can be choosen independent of $n$. 



\section{Convergence analysis: nonconvex case}

\noindent In this section we derive convergence  rates for algorithm SCPG when the smooth function $f$ is nonconvex and nonseparable,  and $\psi$ is simple, but nonseparable, nonconvex and twice differentiable.   Let us first  recall a basic result related to the optimality conditions for a composite optimization problem. 
\begin{lemma}  \cite{Nes:19}
	Given the general composite optimization problem:
	\begin{align*}
		\min_{x \in \text{dom} \, \phi} \theta(x) + \phi(x),
	\end{align*}
	where $\theta$ is differentiable and $\phi$ is convex on the convex domain $\text{dom} \, \phi$. Then, $y^*$ is a stationary point if and only if the following optimality condition holds:
	\begin{align}
		\label{optcond}
		\langle \nabla \theta(y^*), y - y^* \rangle + \phi(y) - \phi(y^*) \geq 0 \quad \forall y \in \text{dom} \, \phi.
	\end{align}
\end{lemma}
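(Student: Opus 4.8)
The plan is to use a standard convexity-plus-first-order-variation argument. Fix an arbitrary $y \in \text{dom}\,\phi$ and, for $t \in (0,1]$, define the segment point $y_t := y^* + t(y - y^*) = (1-t)y^* + t y$. Since $\text{dom}\,\phi$ is convex and both $y^*, y \in \text{dom}\,\phi$, we have $y_t \in \text{dom}\,\phi$ for all such $t$. Because $y^*$ is a (local) minimum of $\theta + \phi$, there is $\bar t \in (0,1]$ such that for all $t \in (0,\bar t]$
\begin{align*}
\theta(y_t) + \phi(y_t) \;\geq\; \theta(y^*) + \phi(y^*).
\end{align*}

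Next I would invoke convexity of $\phi$ to bound $\phi(y_t) = \phi\big((1-t)y^* + ty\big) \leq (1-t)\phi(y^*) + t\phi(y)$. Substituting this into the previous inequality and cancelling $(1-t)\phi(y^*)$ against $\phi(y^*)$ on the two sides gives, for every $t \in (0,\bar t]$,
\begin{align*}
\theta(y_t) - \theta(y^*) + t\big(\phi(y) - \phi(y^*)\big) \;\geq\; 0,
\end{align*}
and dividing by $t>0$ yields $\dfrac{\theta(y_t) - \theta(y^*)}{t} + \phi(y) - \phi(y^*) \geq 0$.

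Finally I would let $t \downarrow 0$. Since $\theta$ is differentiable at $y^*$, the difference quotient $\dfrac{\theta(y^* + t(y-y^*)) - \theta(y^*)}{t}$ converges to the directional derivative $\langle \nabla\theta(y^*), y - y^*\rangle$. Passing to the limit in the displayed inequality gives $\langle \nabla\theta(y^*), y - y^*\rangle + \phi(y) - \phi(y^*) \geq 0$, which is exactly \eqref{optcond}; since $y \in \text{dom}\,\phi$ was arbitrary, the claim follows.

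There is essentially no serious obstacle here: the only point requiring a little care is the order of quantifiers in the local-minimality hypothesis — the threshold $\bar t$ may depend on $y$, but this is harmless because we fix $y$ first and only take $t\to 0$ afterwards — together with the (routine) justification that differentiability of $\theta$ delivers the one-sided directional limit. No convexity or differentiability of $\theta+\phi$ beyond what is assumed is needed, and the convexity of $\text{dom}\,\phi$ is used precisely to keep the whole segment $\{y_t\}$ inside the domain.
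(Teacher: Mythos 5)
Your argument is correct and is the standard proof of this first-order optimality condition: restrict to the segment $y_t=(1-t)y^*+ty$, use convexity of $\phi$ to linearize the nonsmooth part in $t$, invoke local minimality for small $t$, divide by $t$ and pass to the limit using differentiability of $\theta$. The paper does not supply its own proof of this lemma (it is recalled from the literature, citing Nesterov), so there is nothing to contrast with; your write-up, including the remark that the threshold $\bar t$ may depend on $y$ but this is harmless, is complete and accurate.
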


\medskip 

\noindent Next lemma shows an important result, i.e., our algorithm is a descent method. 

\begin{lemma}
	\label{lem1}
	If  Assumption \ref{ass2} holds, then  the iterates of algorithm SCPG satisfies the following  descent inequality:
	\begin{align}
		\label{descent}
		F(x_{k+1}) \leq F(x_{k}) - \dfrac{\eta_{U_{k}}}{2} \|d_{k}\|^2  \quad \forall k \geq 0. 
	\end{align}
\end{lemma}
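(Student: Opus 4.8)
The plan is to exploit the fact that $d_k$ is the exact minimizer of the subproblem \eqref{eq:subproblem}, hence its objective value at $d_k$ is no larger than at $d=0$. Writing the subproblem objective as
\[
m_k(d) = f(x_k) + \langle U_k^T \nabla f(x_k), d\rangle + \frac{H_{f,U_k}}{2}\|d\|^2 + \psi(x_k + U_k d),
\]
optimality gives $m_k(d_k) \le m_k(0) = f(x_k) + \psi(x_k) = F(x_k)$. Then I would bound $F(x_{k+1}) = f(x_k + U_k d_k) + \psi(x_k + U_k d_k)$ from above by $m_k(d_k)$ plus a correction term, and combine.

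The key estimate is to compare $f(x_k + U_k d_k)$ with the quadratic surrogate $f(x_k) + \langle U_k^T \nabla f(x_k), d_k\rangle + \frac{H_{f,U_k}}{2}\|d_k\|^2$. By Lemma \ref{lemma:1}, we have $f(x_k + U_k d_k) \le f(x_k) + \langle U_k^T \nabla f(x_k), d_k\rangle + \frac{L_{U_k}}{2}\|d_k\|^2$. Adding $\psi(x_k+U_kd_k)$ to both sides,
\[
F(x_{k+1}) \le m_k(d_k) - \frac{H_{f,U_k}}{2}\|d_k\|^2 + \frac{L_{U_k}}{2}\|d_k\|^2 \le F(x_k) + \frac{L_{U_k} - H_{f,U_k}}{2}\|d_k\|^2.
\]
In the non-convex (general $\psi$) case, $H_{f,U_k} = L_{U_k} + \eta_{U_k}$, so $L_{U_k} - H_{f,U_k} = -\eta_{U_k}$, which yields exactly \eqref{descent}.

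For the case where $\psi$ is convex along the subspaces from $\mathcal{U}$, we have $H_{f,U_k} = (L_{U_k}+\eta_{U_k})/2$, so the naive bound above only gives coefficient $(L_{U_k}-\eta_{U_k})/2$, which is not enough. Here the extra leverage must come from strong convexity of the surrogate: the map $d \mapsto \langle U_k^T\nabla f(x_k), d\rangle + \frac{H_{f,U_k}}{2}\|d\|^2 + \phi_k(d)$ is $H_{f,U_k}$-strongly convex (since $\phi_k$ is convex), so at its minimizer $d_k$ we gain $m_k(0) \ge m_k(d_k) + \frac{H_{f,U_k}}{2}\|d_k\|^2$, i.e. $m_k(d_k) \le F(x_k) - \frac{H_{f,U_k}}{2}\|d_k\|^2$. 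Feeding this sharper bound into $F(x_{k+1}) \le m_k(d_k) + \frac{L_{U_k}-H_{f,U_k}}{2}\|d_k\|^2$ gives $F(x_{k+1}) \le F(x_k) + \frac{L_{U_k} - 2H_{f,U_k}}{2}\|d_k\|^2 = F(x_k) - \frac{\eta_{U_k}}{2}\|d_k\|^2$, as desired. The main obstacle, then, is recognizing that in the convex-along-subspaces case one must use the strong convexity of the model at the minimizer rather than just the descent-lemma comparison; once that is identified, both cases close the same way, and one should also note that in the general case one may invoke optimality $m_k(d_k) \le m_k(0)$ directly without strong convexity. I would present both cases in parallel, using the unified inequality $F(x_{k+1}) \le m_k(d_k) + \frac{L_{U_k} - H_{f,U_k}}{2}\|d_k\|^2$ as the common starting point.
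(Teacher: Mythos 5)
Your proof is correct and follows essentially the same route as the paper: both cases rest on the Lipschitz upper bound of Lemma \ref{lemma:1} together with the minimizing property of $d_k$, with the nonconvex case closed exactly as you do via $m_k(d_k)\le m_k(0)$. In the convex-along-subspaces case, the extra $\frac{H_{f,U_k}}{2}\|d_k\|^2$ gain you obtain from the strong-convexity growth bound at the minimizer is precisely what the paper extracts by evaluating the first-order variational inequality \eqref{optcond} at $d=0$; after rearrangement the two inequalities coincide, so the arguments are equivalent.
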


\proof{Proof} See Appendix for proof.   \Halmos
\endproof 

\medskip 

\noindent From previous lemma we see that one can choose a larger stepsize when $\psi$ is convex along the subspaces generated from $\mathcal{U}$, since $\Hf$ must  satisfy in this case  $\Hf> L_{U_{k}}/2$.  Otherwise, we should choose $\Hf$ satisfying $\Hf> L_{U_{k}}$. Let us now introduce some notations
that will be used in the sequel:  
\begin{equation}
	\bar{\nabla}^2\Psi(z_{1},\cdots,z_{n}) = \begin{bmatrix}
		\nabla_{1}^2\psi(z_{1})  \\
		\vdots \\		
		\nabla_{n}^2\psi(z_{n}) \label{hess}
	\end{bmatrix}, 
\end{equation}	

\noindent with $\nabla_{i}^2\psi(z_{i})$ being the $i$-th row of the hessian of $\psi$ at the point $z_{i} \in \mathbb{R}^{n}$, and

\begin{equation}
	\HF= \max_{z_{1},\cdots,z_{n} \in {\text{conv}}(\mathcal{L}_{F}(x_{0}))} \|U_{k}^{T}\bar{\nabla}^2\Psi(z_{1},\cdots,z_{n})U_{k} + \Hf I_{p\times p} \|_F,  \label{Hpsi}
\end{equation}
\noindent which is finite provided that the sublevel set $\mathcal{L}_{f}(x_{0})$ is bounded.  \red{ Note that  SCPG  does not require the knowledge of  the coordinatewise Lipschitz constants of the whole function  $F$ (or of the term  $\psi$) over $\mathcal{L}_{f}(x_{0})$. The constant $\HF$   only appears in the convergence rates. In some applications  the second term, $\psi$, although differentiable might have expensive  gradient evaluation or  the corresponding  coordinatewise Lipschitz constants  might be difficult to estimate; on the other hand,  if the computation of the prox for the second term $\psi$,  along a subspace, is easy, then  algorithm SCPG can be used (see Section 6 for such proximal friendly functions).} Using the well-aligned assumption, we  prove next  that the gradients of the objective function $F$ evaluated along the well-aligned iterations are bounded by the coordinate directions $\|d_k\|$. 

\begin{lemma} 
	\label{lem2}
	If  Assumptions \ref{ass2} and \ref{ass1} hold and $\mathcal{L}_{f}(x_{0})$ is bounded, then  the iterates of algorithm SCPG satisfy the following relation:
	\begin{align}
		\label{eq:23}
		\|\nabla F(x_{k})\|^2  \leq \frac{\HF^2}{\alpha^2}\|d_{k}\|^2  \quad \forall k \in \mathbb{A}_{K}.  
	\end{align}
\end{lemma}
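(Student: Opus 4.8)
The plan is to start from the optimality condition for the subproblem \eqref{eq:subproblem} and turn it into an upper bound on $\|U_k^T\nabla F(x_k)\|$ in terms of $\|d_k\|$, then invoke the well-aligned property \eqref{eq:1} to pass to $\|\nabla F(x_k)\|$. First I would write the first-order optimality condition for $d_k$: since $\phi_k(d)=\psi(x_k+U_kd)$ has gradient $U_k^T\nabla\psi(x_k+U_kd)$, stationarity of the subproblem objective gives
\[
U_k^T\nabla f(x_k) + \Hf d_k + U_k^T\nabla\psi(x_k+U_kd_k) = 0.
\]
Now observe that $U_k^T\nabla F(x_k) = U_k^T\nabla f(x_k) + U_k^T\nabla\psi(x_k)$. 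Substituting the optimality relation to eliminate $U_k^T\nabla f(x_k)$, I get
\[
U_k^T\nabla F(x_k) = -\Hf d_k - U_k^T\nabla\psi(x_k+U_kd_k) + U_k^T\nabla\psi(x_k) = -\Hf d_k - U_k^T\big(\nabla\psi(x_k+U_kd_k) - \nabla\psi(x_k)\big).
\]

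Next I would bound the second term using the second-order smoothness of $\psi$ on the relevant bounded region. By the mean value theorem (integral form) applied to $t\mapsto\nabla\psi(x_k+tU_kd_k)$, we have $U_k^T(\nabla\psi(x_k+U_kd_k)-\nabla\psi(x_k)) = \big(\int_0^1 U_k^T\nabla^2\psi(x_k+tU_kd_k)U_k\,dt\big)d_k$; since $F(x_{k+1})\le F(x_k)\le F(x_0)$ by the descent Lemma~\ref{lem1}, both $x_k$ and the whole segment $[x_k,x_{k+1}]$ lie in $\mathrm{conv}(\mathcal L_F(x_0))$, so the integrand is bounded in norm by $\Hpsi$ (this is exactly why $\Hpsi$ was defined as a max over $\mathrm{conv}(\mathcal L_F(x_0))$). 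Hence $\|U_k^T(\nabla\psi(x_k+U_kd_k)-\nabla\psi(x_k))\|\le \Hpsi\|d_k\|$, and the triangle inequality yields $\|U_k^T\nabla F(x_k)\|\le (\Hf+\Hpsi)\|d_k\|$. Squaring and using $(a+b)^2\le 2a^2+2b^2$ gives $\|U_k^T\nabla F(x_k)\|^2\le 2(\Hf^2+\Hpsi^2)\|d_k\|^2$.

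Finally, for $k\in\mathbb A_K$ the matrix $U_k$ is well-aligned, so by \eqref{eq:1}, $\alpha^2\|\nabla F(x_k)\|^2\le\|U_k^T\nabla F(x_k)\|^2\le 2(\Hf^2+\Hpsi^2)\|d_k\|^2$, i.e. $\|\nabla F(x_k)\|^2\le \frac{2(\Hpsi^2+\Hf^2)}{\alpha^2}\|d_k\|^2$. I note the statement as written has $\alpha$ rather than $\alpha^2$ in the denominator; since $\alpha\in(0,1)$ we have $1/\alpha\le 1/\alpha^2$ only in the wrong direction, so either the intended constant uses $\alpha^2$, or the definition \eqref{eq:1} should be read with $\|U_k^T\nabla F(x_k)\|\ge\sqrt\alpha\,\|\nabla F(x_k)\|$ — in any case the proof structure is unaffected and I would simply carry whichever normalization the authors fixed. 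The main obstacle is the middle step: making precise that the Hessian bound $\Hpsi$ legitimately applies along the entire segment $x_k+tU_kd_k$, $t\in[0,1]$, which requires the descent property from Lemma~\ref{lem1} together with convexity of $\mathrm{conv}(\mathcal L_F(x_0))$; everything else is algebraic manipulation of the optimality condition.
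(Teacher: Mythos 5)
Your proposal is correct and follows essentially the same route as the paper: the same first-order optimality condition, the same substitution to express $U_k^T\nabla F(x_k)$ as $-\Hf d_k - U_k^T(\nabla\psi(x_{k+1})-\nabla\psi(x_k))$, the same mean-value/Hessian bound via $\Hpsi$ on $\mathrm{conv}(\mathcal{L}_F(x_0))$, and the same final appeal to well-alignment. Your observation about $\alpha$ versus $\alpha^2$ is well taken — the paper's own proof uses $\alpha\|\nabla F(x_k)\|^2\le\|U_k^T\nabla F(x_k)\|^2$, which matches the squared (JLT-style) normalization rather than the literal square of \eqref{eq:1} — so your flag identifies a genuine notational inconsistency in the source rather than a gap in your argument.
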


\proof{Proof} See Appendix for proof.   \Halmos
\endproof
	
	\medskip 
\noindent For simplicity of the exposition, in Algorithm 1 we consider solving subproblem (9) globally, although our convergence results  (see the proofs of Lemmas 4 and 5) hold for a more relaxed condition which requires just finding a stationary  point $d_k$ for the subproblem (9) such that the objective function in subproblem (9) is smaller when evaluated in $d_k$ than in $0$. Note that such $d_k$ is usually easier to compute than finding the global minimum of (9). 

\medskip 

	\noindent Now, using  probability arguments as in  \cite{CarRob:21,ChuLu:02,GraRoy:15}, but adapted to our new algorithm SCPG, we have that with high-probability the number of well-aligned iterations is bounded  below. 
	
	\begin{lemma}
		\label{lem3}
		If  Assumptions \ref{ass2} and \ref{ass1} hold, then with high-probability the following  bound on the number of well-aligned iterations holds: 
		\begin{equation}
			\label{eq:7}
			\mathbb{P} \left[ |\mathbb{A}_{K}| \geq (1- \beta)(1 - \delta)(K+1) \right] \geq 1 - e^{\left(-\frac{\beta^2}{2}(1-\delta)(K+1)\right)} \quad \forall \beta \in (0,1).	
		\end{equation}
	\end{lemma}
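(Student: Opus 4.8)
The plan is to express $|\mathbb{A}_K|$ as a sum of $0/1$ indicator variables whose conditional success probabilities are uniformly bounded below, and then invoke a lower‑tail (Chernoff‑type) concentration bound for such \emph{dependent} sums, exactly in the spirit of \cite{ChuLu:02}. First I would set, for each of the $K+1$ iterations $k = 0,1,\dots,K$,
\[
Z_k := \mathbf{1}_{\{U_k \text{ is well-aligned}\}}, \qquad \text{so that}\quad |\mathbb{A}_K| = \sum_{k=0}^{K} Z_k.
\]
Since the SCPG update produces $x_k$ (and hence $\nabla F(x_k)$) as a deterministic function of $x_0$ and the past samples $U_0,\dots,U_{k-1}$, the vector $\nabla F(x_k)$ is $\mathcal{F}_k$-measurable. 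Conditioning on $\mathcal{F}_k$ and applying Assumption \ref{ass1} to the now-fixed vector $\nu = \nabla F(x_k)$ gives the one-step estimate $\mathbb{E}[Z_k \mid \mathcal{F}_k] = \mathbb{P}[\,U_k \text{ well-aligned}\mid \mathcal{F}_k\,] \geq 1 - \delta$ almost surely.

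Next I would run the standard exponential-moment argument in this conditional form. For any $t < 0$ one has $\mathbb{E}[e^{tZ_k}\mid\mathcal{F}_k] = 1 - (1-e^t)\,\mathbb{E}[Z_k\mid\mathcal{F}_k] \leq 1 - (1-\delta)(1-e^t) \leq e^{-(1-\delta)(1-e^t)}$, using $1-e^t\ge 0$ and $1+x\le e^x$; peeling off the $K+1$ factors one at a time via the tower property yields $\mathbb{E}\big[e^{t|\mathbb{A}_K|}\big] \leq e^{-(1-\delta)(1-e^t)(K+1)}$. Markov's inequality applied to $e^{t|\mathbb{A}_K|}$ (note $t<0$, so $|\mathbb{A}_K|\le a$ becomes $e^{t|\mathbb{A}_K|}\ge e^{ta}$) then gives, for every $t<0$,
\[
\mathbb{P}\big[\,|\mathbb{A}_K| \leq (1-\beta)(1-\delta)(K+1)\,\big] \leq \exp\!\Big(-(1-\delta)(K+1)\big[\,t(1-\beta) + 1 - e^t\,\big]\Big).
\]
Choosing the maximizer $t = \log(1-\beta)$ and using the elementary inequality $(1-\beta)\log(1-\beta)+\beta \geq \beta^2/2$ for $\beta\in(0,1)$ (a one-line convexity check: the left side vanishes to second order at $\beta=0$ and its second derivative is $\beta/(1-\beta)\ge0$) collapses the right-hand side to $\exp\!\big(-\tfrac{\beta^2}{2}(1-\delta)(K+1)\big)$, and taking complements yields \eqref{eq:7}.

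The only genuinely delicate point is that the $Z_k$ are \emph{not} independent — the law of $U_k$ may in general be coupled to the trajectory through $x_k$ — so a plain binomial Chernoff bound does not apply directly. This is precisely what the conditional moment-generating-function estimate above circumvents: it uses only the per-step lower bound $\mathbb{E}[Z_k\mid\mathcal{F}_k]\ge 1-\delta$ together with the tower property (equivalently, one checks that $M_j = \exp\big(t\sum_{k=0}^{j} Z_k + (1-\delta)(1-e^t)(j+1)\big)$ is a supermartingale and applies optional stopping). Everything else — the optimization over $t$ and the scalar inequality — is routine, so I would likely just state the corresponding lower-tail lemma of \cite{ChuLu:02} with parameter $1-\delta$ and include the short derivation above for completeness.
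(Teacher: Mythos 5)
Your proof is correct and follows essentially the same route as the paper: decompose $|\mathbb{A}_{K}|$ into conditional Bernoulli indicators, bound the conditional moment generating function, peel off the $K+1$ factors with the tower property, apply Markov's inequality, and optimize over $t$. The only difference is in the elementary scalar estimates --- the paper uses the sub-Gaussian bound $\mathbb{E}[e^{-t(T_{i}-p_{i})}\mid x_{i}]\le e^{t^2p_{i}/2}$ (via the inequality $py+\log(1-p+pe^{-y})\le y^2p/2$ proved in the Appendix) together with the choice $t=\lambda/\sum_i p_{i}$, whereas you use $1+x\le e^{x}$ together with $(1-\beta)\log(1-\beta)+\beta\ge\beta^2/2$; both yield the same exponent.
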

	
	\proof{Proof}
	See Appendix  for a proof. \Halmos
	\endproof


	\subsection{Sublinear convergence}
	\noindent Before proving sublinear rate of convergence for  algorithm  SCPG in the nonconvex case,  we need to define the following constants (recall that we assume $\mathcal{U}$  bounded,  at least with high probability):
	\[  \HF \leq \HFmax   \;\; \quad  \forall k \geq 0. \]  
	Furthermore, consider: 
	\begin{equation}
		\eta_{\min} = \min_{k \in \mathbb{N} } \eta_{U_{k}} \;\; 
		\text{ and} \;\;   C = \dfrac{\alpha^2 \eta_{\min}}{2\HFmax^2}. \label{eq:40}
	\end{equation}
	
	\noindent Now, we are ready to prove a high-probability  sublinear bound for the minimum norm of subgradients generated by algorithm SCPG.  
		
	\begin{theorem} 
		\label{theo1}
		If  Assumptions \ref{ass2} and \ref{ass1} hold and $\mathcal{L}_{f}(x_{0})$ is bounded, then for the SCPG sequence $(x_k)_{k\geq 0}$  we have with high-probability the following convergence rate 	for all $\beta \in (0,1)$:  
		\begin{equation}
			\mathbb{P}\left[\min_{0\leq i\leq K} \|\nabla F(x_{i})\|^{2} \leq \dfrac{F(x_{0})-F^{*}}{C(1- \beta)(1 - \delta)(K+1)} \right]  \geq 1 - e^{-\frac{\beta^2}{2}(1-\delta) \cdot (K+1)}.   \label{eq:9}
		\end{equation}
	\end{theorem}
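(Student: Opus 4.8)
The plan is to chain together the deterministic estimates of Lemmas~\ref{lem1} and~\ref{lem2} into a telescoping bound, and then to feed in the probabilistic lower bound on $|\mathbb{A}_K|$ from Lemma~\ref{lem3}. This is essentially a three-line argument once those lemmas are in place.

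First I would sum the descent inequality \eqref{descent} over $k=0,\dots,K$. Using $F(x_{K+1})\geq F^*$ and $\eta_{U_k}\geq \eta_{\min}$, this yields
\[
F(x_0)-F^* \;\geq\; \sum_{k=0}^{K}\frac{\eta_{U_k}}{2}\|d_k\|^2 \;\geq\; \frac{\eta_{\min}}{2}\sum_{k\in\mathbb{A}_K}\|d_k\|^2,
\]
where the last step merely discards the nonnegative terms with indices outside $\mathbb{A}_K$. Next, for each $k\in\mathbb{A}_K$, Lemma~\ref{lem2} gives $\|d_k\|^2\geq \tfrac{\alpha}{2(\Hpsi^2+\Hf^2)}\|\nabla F(x_k)\|^2$; bounding $\Hpsi\leq\Hpsimax$ and $\Hf\leq\Hfmax$ (both finite since $\mathcal{U}$ and the level set $\mathcal{L}_F(x_0)$ are bounded) and recalling the definition \eqref{eq:40} of $C$, I obtain
\[
F(x_0)-F^* \;\geq\; C\sum_{k\in\mathbb{A}_K}\|\nabla F(x_k)\|^2 \;\geq\; C\,|\mathbb{A}_K|\min_{k\in\mathbb{A}_K}\|\nabla F(x_k)\|^2.
\]
Since $\mathbb{A}_K\subseteq\{0,1,\dots,K\}$ we have $\min_{k\in\mathbb{A}_K}\|\nabla F(x_k)\|^2\geq \min_{0\leq i\leq K}\|\nabla F(x_i)\|^2$, so on any realization with $|\mathbb{A}_K|\geq 1$,
\[
\min_{0\leq i\leq K}\|\nabla F(x_i)\|^2 \;\leq\; \frac{F(x_0)-F^*}{C\,|\mathbb{A}_K|}.
\]

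Finally I would intersect with the event $\mathcal{E}=\{\,|\mathbb{A}_K|\geq (1-\beta)(1-\delta)(K+1)\,\}$, on which (for $K$ large enough that $(1-\beta)(1-\delta)(K+1)\geq 1$) the right-hand side above is at most $\tfrac{F(x_0)-F^*}{C(1-\beta)(1-\delta)(K+1)}$; hence $\mathcal{E}$ is contained in the event appearing in \eqref{eq:9}, and Lemma~\ref{lem3} gives $\mathbb{P}(\mathcal{E})\geq 1-e^{-\frac{\beta^2}{2}(1-\delta)(K+1)}$, which is exactly the stated bound. There is no serious obstacle here beyond bookkeeping: the heavy probabilistic work is already packaged in Lemma~\ref{lem3}, and the only point requiring a little care is that every analytic estimate feeding the chain (in particular the uniform bounds $\Hpsimax,\Hfmax$ coming from boundedness of $\mathcal{U}$ and of $\mathcal{L}_F(x_0)$) holds pathwise, so that the sole source of randomness in the final inequality is the cardinality of $\mathbb{A}_K$.
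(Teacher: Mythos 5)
Your argument is correct and is essentially the paper's own proof: telescope the descent inequality of Lemma~\ref{lem1}, lower-bound $\|d_k\|^2$ on well-aligned iterations via Lemma~\ref{lem2} with the uniform constants in \eqref{eq:40}, and then invoke Lemma~\ref{lem3} to control $|\mathbb{A}_K|$ with high probability. The only cosmetic difference is that you restrict the sum to $\mathbb{A}_K$ before bounding, whereas the paper keeps the full sum and extracts the well-aligned subsum afterwards; this changes nothing.
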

	
	\proof{Proof}
		From  Lemma \ref{lem1} and \eqref{eq:40}, we have
		\begin{eqnarray}
			\dfrac{\eta_{\min}}{2} \sum_{i=0}^{K}\|d_{i}\|^2 &\leq&  
			\sum_{i=0}^{K} \dfrac{\eta_{U_{i}}}{2} \|d_{i}\|^2  
			\leq \sum_{i=0}^{K} \left( F(x_{i}) - F(x_{i+1}) \right) \nonumber  \\ 
			&=&  F(x_{0}) - F(x_{K+1}) \leq  F(x_{0}) - F^{*}. \label{eq:46}
		\end{eqnarray}
		
		\noindent If we assume that the set of indexes corresponding to well-aligned iterations are given by $\mathbb{A}_{K}=\{k_{0},...,k_{J}\}$. Then, from Lemma \ref{lem2} it follows that: 
		\begin{align*}
			|\mathbb{A}_{K}| C \min_{0\leq i\leq J} \|\nabla F(x_{k_{i}})\|^2 &\leq  \sum_{i=0}^{J} \dfrac{\alpha^2 \eta_{\min}}{2\HF^2 } \| \nabla F(x_{k_{i}})\|^2  \\ 
			& \leq  \sum_{i=0}^{J} \dfrac{\eta_{\min}}{2} \|d_{k_{i}}\|^2 \leq  \dfrac{\eta_{\min}}{2} \sum_{i=0}^{K}\|d_{i}\|^2 \leq F(x_{0}) - F^{*}.
		\end{align*}
		
		\noindent Therefore, we get: 
		\begin{equation}
			|\mathbb{A}_{K}| \min_{0\leq i\leq K} \|\nabla F(x_{i})\|^2 \leq  \dfrac{1}{C}\left(F(x_{0})-F^{*}\right). \label{eq:10}
		\end{equation}
		
		\noindent Finally, from Lemma \ref{lem3}  we get:  
		\begin{equation}
			\mathbb{P}\left[  \min_{0\leq i\leq K} \|\nabla F(x_{i})\|^2 \leq \dfrac{|\mathbb{A}_{K}| \min_{0\leq i\leq k} \|\nabla F(x_{i})\|^2}{(1- \beta)(1 - \delta)(K+1)} \right] \geq 1 - e^{-\frac{\beta^2}{2}(1-\delta)(K+1)}.
			 \label{eq:11} 
		\end{equation}
		\noindent Hence, from \eqref{eq:10} and \eqref{eq:11} the statement  \eqref{eq:9} follows.
	\Halmos 
\endproof
	
	\medskip 
	
	\noindent Alternatively, the convergence rate from previous theorem can be stated in terms of a given accuracy  $\epsilon>0$ and probability $\gamma$, i.e., for 
	\begin{equation*}
		K \geq \max \left\lbrace \dfrac{F(x^{0}) - F^*}{\epsilon^2 \cdot C (1-\delta) (1-\beta)} - 1, \dfrac{2}{\beta^2(1-\delta)} \ln\dfrac{1}{\gamma} - 1\right\rbrace  
	\end{equation*}
	we have
	\begin{equation*}
		\mathbb{P}\left[\min_{0\leq i\leq K} \| \nabla F(x_{i}) \| \leq \epsilon \right]  \geq 1 - \gamma. 
	\end{equation*} 
	
	\noindent  This  convergence rate, where the probability $\gamma$ enters logaritmically in the estimate,   is similar to the ones derived in the literature for random coordinate descent \cite{Nes:10,RicTak:11}.    It is important to note that the high-probability sublinear bound from the previous theorem holds for objective functions having both terms $f$ and $\psi$ nonconvex and nonseparable.   
	
	\medskip 
	
	\begin{remark}
		\label{rem1}
	Usually, in the coordinate descent literature, where  $U$'s are submatrices of the identity matrix $I_n$ and $\psi$ is separable,   the complexity estimates are expressed in terms of the number of blocks (i.e.,  the ratio between  dimensions of the original problem and of the subproblems).  In our notations, this ratio can be defined as $N=n/p$. Moreover, the existing works for separable $\psi$ have derived convergence rates that depend linearly on such $N$ \cite{BecTet:13,FerRic:15,LuXia:14,Nes:10,NecCli:16,NecTak:20,RicTak:11}.    Let us also analyze the dependence on $N$ of the convergence rate \eqref{eq:9} of Theorem \ref{theo1} for  algorithm SCPG in the optimality given by  the norm of the gradient.   First, note that we can consider $1 - \delta \simeq 1$ and chose $\beta$ such that $\max \left(\dfrac{1}{\beta^2}, \dfrac{1}{1-\beta}\right) $ is small, e.g., $\beta = \dfrac{\sqrt{5} -1}{2}$ (the golden number). 	Now, let us analyze the rate in the norm of the gradient:
	
	\medskip 
	
	\noindent \textit{Scaled sampling matrices.}  Choosing $\alpha^2 = 1 - \nu \sqrt{2N \log(1/\delta)}$,  we get that with high-probability  the  convergence rate for  the norm of the gradient  is of order: 
	\begin{align*}
		\mathcal{O} \left( \frac{ \sqrt{\nu} N^{\frac{1}{4}}  \HFmax }{\sqrt{\eta_{\min} k}}\right).
	\end{align*} 
	\textit{JLT matrices.}  Choosing  $\alpha = 1/2$, then  $p\geq4|\log(\delta)|$ and we get that with high-probability  the  convergence rate for  the norm of the gradient  is of order:
	\begin{align*}
		O\left(\frac{ \HFmax}{\sqrt{\eta_{\min}k}}\right).
	\end{align*}
	In conclusion, when $U_k$ are scaled sampling matrices our convergence estimate depends on  $N$, while there is no explicit dependendence on $N$ for JLT matrices (although, in some particular cases, one can still have some hiden dependence on $N$ via the constant $\HFmax$).  	\Halmos 
	\end{remark}
	


	\subsection{Better  convergence under KL}
	In this section we prove convergence rates for our algorithm when $F$ satisfy additionally the KL property (in particular,  $F$ is uniformly convex).	Consider $(x_{k})_{k\geq 0}$ the sequence generated by algorithm SCPG and $(x_{k_{j}})_{j\geq 0}$ the subsequence of $(x_{k})_{k\geq 0}$ such that the iteration $k_{j}$ is well-aligned. Let us denote the set of limit points of the sequence $(x_{k_{j}})_{j\geq 0}$ by $\mathcal{X}(x_{0})$.  Next lemma derives some basic properties for  $\mathcal{X}(x_{0})$. 
	
	\begin{lemma}
		\label{LemmaKL}
		If Assumption \ref{ass2} holds and  the sublevel set $\mathcal{L}_{f}(x_{0})$ is bounded,  then  $\mathcal{X}(x_{0})$ is  a compact set,  $F (\mathcal{X}(x_{0})) = F_*$,  $F(x_{k})  \to F_{*}$ a.s., and additionally $\nabla F(\mathcal{X}(x_{0}))=0$,    $\| \nabla F(x_{k})\|  \to 0$~a.s.
	\end{lemma}
	
\proof{Proof} See proof in Appendix.   	\Halmos 	
\endproof
	
	\medskip 
	
	\noindent Next lemma shows convergence rates for a sequence satisfying a certain recurrence. 

	\begin{lemma}
		\label{lemma:rec}
		Let $\zeta > -1$, $c>0$  and  $\{\Delta_{k}\}_{k\geq0}$ be a  decreasing sequence of positive numbers satisfying the following recurrence:  
		
		\begin{equation}
			\Delta_{k} - \Delta_{k+1} \geq c \Delta_{k}^{\zeta+1} \quad \forall k \geq 0. \label{eq:78}
		\end{equation}
		
		\noindent Then, we have:
		
		\begin{itemize}
			\item[](i) For $c=1$ and  $\zeta> 0$ the sequence  $\Delta_k \to 0$ with sublinear rate: 
			\begin{equation}
				\Delta_{k} \leq \dfrac{ \Delta_{0}}{\left( \zeta \Delta_{0}^{\zeta} \cdot k + 1\right)^\frac{1}{\zeta}} \leq \left( \dfrac{1}{\zeta k}\right) ^{\frac{1}{\zeta}}. \label{eq:61}
			\end{equation} 
			
			\item[](ii) For $c\in(0,1)$ and  $\zeta = 0$ the sequence  $\Delta_k \to 0$ with linear rate: 
			\begin{equation}
				\Delta_{k} \leq \left(1-c\right)^{k}\Delta_{0}.  
				\label{eq:62}
			\end{equation}	 
			
			\item[](iii) 	For  $c>0$ and $\zeta \in (-1, 0)$ the sequence  $\Delta_k \to 0$ with superlinear rate: 
			\begin{equation}
				\Delta_{k+1} \leq \left(\dfrac{1}{1 + c \Delta_{k+1}^{\zeta}}\right) \Delta_{k}.  
				\label{eq:63}
			\end{equation} 	  
		\end{itemize}
		
	\end{lemma}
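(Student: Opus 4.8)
The plan is to handle the three regimes separately, after one preliminary observation common to all of them. Since $c>0$ and the $\Delta_k$ are positive, \eqref{eq:78} forces $\Delta_{k+1}\le\Delta_k$, so $\{\Delta_k\}$ is non-increasing; being bounded below it converges to some $\Delta_\infty\ge 0$, and if $\Delta_\infty>0$ then $\Delta_k-\Delta_{k+1}\ge c\Delta_\infty^{\zeta+1}>0$ for every $k$ (using $\zeta+1>0$), which summed over $k$ contradicts convergence. Hence $\Delta_k\to 0$ in all cases. This monotonicity is also the tool that lets me swap $\Delta_k$ for $\Delta_{k+1}$ inside the nonlinear term whenever the sign of the exponent makes that useful. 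Case (ii) ($\zeta=0$) is then immediate: \eqref{eq:78} reads $\Delta_{k+1}\le(1-c)\Delta_k$, and iterating gives \eqref{eq:62}.

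For case (i) ($c=1$, $\zeta>0$) I would pass to the sequence $\Delta_k^{-\zeta}$. The function $h(t)=t^{-\zeta}$ is convex and decreasing on $(0,\infty)$, so $h(\Delta_{k+1})-h(\Delta_k)\ge h'(\Delta_k)\,(\Delta_{k+1}-\Delta_k)=\zeta\Delta_k^{-\zeta-1}(\Delta_k-\Delta_{k+1})$; substituting the recurrence $\Delta_k-\Delta_{k+1}\ge\Delta_k^{\zeta+1}$ yields the clean increment bound $\Delta_{k+1}^{-\zeta}-\Delta_k^{-\zeta}\ge\zeta$. Summing this over $j=0,\dots,k-1$ gives $\Delta_k^{-\zeta}\ge\Delta_0^{-\zeta}+\zeta k$, and inverting (multiplying numerator and denominator by $\Delta_0^{\zeta}$) produces precisely the first inequality in \eqref{eq:61}; the second then follows by discarding the additive $1$ in the denominator and using $(\Delta_0^{\zeta})^{1/\zeta}=\Delta_0$.

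For case (iii) ($c>0$, $-1<\zeta<0$) no auxiliary sequence is needed. Since $\Delta_{k+1}\le\Delta_k$ and now $\zeta+1\in(0,1)$, we have $\Delta_k^{\zeta+1}\ge\Delta_{k+1}^{\zeta+1}$, so \eqref{eq:78} gives $\Delta_k\ge\Delta_{k+1}+c\Delta_{k+1}^{\zeta+1}=\Delta_{k+1}\bigl(1+c\Delta_{k+1}^{\zeta}\bigr)$, which rearranges to \eqref{eq:63}. Because $\zeta<0$ and $\Delta_{k+1}\to 0$ by the preliminary observation, $\Delta_{k+1}^{\zeta}\to\infty$, so the contraction factor $1/(1+c\Delta_{k+1}^{\zeta})$ tends to $0$ and the convergence is indeed superlinear.

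The only step that is not pure algebra on \eqref{eq:78} is the convexity inequality used in case (i) --- equivalently, the monotonicity of $t\mapsto t^{-\zeta-1}$, which lets one turn the one-step decrease $\Delta_k-\Delta_{k+1}$ into a decrease of the reciprocal powers with the universal lower bound $\zeta$. Once that increment bound is in hand, everything reduces to telescoping together with the monotonicity of $\{\Delta_k\}$, so I expect this to be the only place requiring genuine care.
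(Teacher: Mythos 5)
Your proposal is correct and follows essentially the same route as the paper: cases (ii) and (iii) are handled identically, and in case (i) both arguments come down to the telescoping bound $\Delta_{k+1}^{-\zeta}-\Delta_{k}^{-\zeta}\ge\zeta$, which is then summed and inverted. The only difference is cosmetic: you derive that bound from the tangent-line (convexity) inequality for $t\mapsto t^{-\zeta}$ at $\Delta_{k}$, whereas the paper proves the equivalent scalar inequality $y^{-\zeta}+\zeta y-(1+\zeta)\ge 0$ on $(0,1]$ via a monotonicity argument and applies it to the ratio $y=\Delta_{k+1}/\Delta_{k}$.
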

	
	\proof{Proof}
	See proof in Appendix. \Halmos
	\endproof
	
	\medskip
	
	\noindent For simplicity of the exposition, let us define the following constants: 	
	\begin{equation}
		\gamma_{1} = C \sigma_{q}^{-\frac{2}{q}},
			\quad \ 
		\gamma_{2} =  1- C \sigma_{2}^{-1} ,
	 \quad C_1 = \dfrac{2-q}{q} \text{ and } C_{2}=(1-\beta)(1-\delta).\label{eq:48}
	\end{equation}

	\noindent Recall that $\mathbb{A}_{K}=\{k_{0}, \cdots ,k_{j}, \cdots, k_J \}$ denotes the set of well aligned iterations until iteration $K$. In the next theorem we assume that $F$ satisfies the KL condition \eqref{eq:kl} with constant value $F_{*}$ and constant $q \in(1,2]$  around the limit points of the sequence $(x_{k_j})_{j\geq 0}$,  denoted $X(x_{0})$.  \textit{To the best of our knowledge there are very few studies analyzing  the convergence rate of stochastic algorithms under the KL property, see e.g., \cite{MauFadAtt:22}. The main difficulty comes from the fact that  the KL condition holds only locally, while for a stochastic algorithm we can usually  prove  almost sure convergence   $F(x_{k_j}) \inas F_{*}$ (see Lemma 	\ref{LemmaKL}), which means that there exists some measurable set $\Omega$ such that $ \mathbb{P}[\Omega] =1$ and for any  $\epsilon, \gamma>0$ and $\omega \in \Omega$  there exists $k_{j_{\epsilon,\gamma}}(\omega)$ such that for any $k_j \geq k_{j_{\epsilon,\gamma}}(\omega)$ we have   $F(x_{k_j}(\omega)) - F_{*} \leq \sigma_q \| \nabla F(x_{k_j}(\omega))\|^q$. Note that we cannot infer from this that $F(x_{k_j}) - F_* \leq \sigma_q \| \nabla F(x_{k_j})\|^q$ a.s. for $k_j$ large enough as $k_{j_{\epsilon,\gamma}}(\omega)$ is a random variable which, in general,  cannot be  bounded uniformly on $\Omega$.   
However, invoking measure theoretic arguments to pass from almost sure convergence to almost uniform convergence thanks to
Egorov’s theorem \cite{Rud:87}, we can prove that  for any $\bar \delta >0$,
with probability at least  $1-  \bar \delta$ the sequence $(x_{k_j})_{j\geq 0}$  satisfies KL   for all $k_{j} \geq k_{j_{\bar \delta,\epsilon,\gamma}}$}.  

	\begin{theorem}
		\label{theo:KL1}
		Let  Assumptions \ref{ass2} and  \ref{ass1} hold. Additionally, assume that the sublevel set $\mathcal{L}_{f}(x_{0})$ is bounded and  $F$ satisfies the KL property \eqref{eq:kl} on $\mathcal{X}(x_{0})$. Then, for any $\bar \delta>0,$ there exists  $k_{j_{ \bar \delta,\epsilon,\gamma}}>0,$  such that for all $K$ satisfying 
		\begin{equation}
			\label{Kcondition}
			K > \max \left(\dfrac{1+ j_{\bar  \delta,\epsilon,\gamma}}{(1-\beta)(1-\delta)} - 1, k_{j_{\bar  \delta,\epsilon,\gamma}} \right)\!,
		\end{equation} 		
		\noindent  the sequence $(x_k)_{k\geq 0}$ generated by SCPG  satisfies the following statements with probability at least~$1-\bar \delta$:  
		
		\noindent  (i) If $q\in (1,2)$, then the following sublinear rate holds:	
				\begin{eqnarray}
					\label{eq:13}
				\mathbb{P}\left[F(x_{K}) - F_* \leq \dfrac{1}{ \left( \gamma_{1}C_{1}\left(C_{2}(K+1) - (j_{ \bar \delta,\epsilon,\gamma}+1) \right) \right)^{\frac{q}{2-q}}} \right] 
					\geq 1 - e^{-\frac{\beta^2}{2}(1-\delta)(K+1)}, 
				\end{eqnarray}
		
		
		\noindent  (ii) If $q=2$, then the following linear rate holds: 	
		\begin{eqnarray}
			\mathbb{P}\left[ \!F(x_{K}) - F_{*} \!\leq\!  \gamma_{2}^{[C_{2}(K+1) - (j_{\bar  \delta,\epsilon,\gamma}+1)]}(F(x_{0}) - F_{*})\right] \! 
			\geq 1 - e^{-\frac{\beta^2}{2}(1-\delta)(K+1)}.  
			\label{eq:51}
		\end{eqnarray}
		
		\noindent  (iii)   If $q>2$, then with probability at least~$1-\bar \delta$ the following superlinear rate holds:
		\begin{equation}
			F(x_{k_{j+1}}) - F_{*} \leq \left(\dfrac{1}{1 + C \sigma_{q}^{-\frac{2}{q}} \left( F(x_{k_{j+1}}) - F_{*} \right)^{\frac{2}{q}-1}} \right)  F(x_{k_{j}}) - F_{*}  \quad \forall k_{j} \geq k_{j_{ \bar \delta,\epsilon,\gamma}}
			\label{eq:53}
		\end{equation}
	\end{theorem}
	
	\proof{Proof}
Denote  the set of indexes corresponding to well-aligned iterations until $K$  by $\mathbb{A}_{K}=\{k_{0},...,k_{J}\}$.  
 We assume that $F$ satisfies the KL condition \eqref{eq:kl} with constant value $F_{*}$ and  $q \in(1,2]$  around the limit points of the sequence $(x_{k_j})_{j\geq 0}$,  denoted $X(x_{0})$. From Lemma \ref{LemmaKL} we have that $F(x_{k_{j}}) \inas F_{*}$ and $\|\nabla F(x_{k_{j}}) \| \inas 0$, i.e., there exists a set $\Omega$ such that  $ \mathbb{P}[\Omega]  = 1$  and $F(x_{k_j}(\omega)) \to F_*(\omega)$ and $\|\nabla F(x_{k_j}(\omega)) \| \to 0$  for all $\omega \in \Omega$. This implies that for any  $\epsilon, \gamma>0$ and $\omega \in \Omega$  there exists $k_{j_{\epsilon,\gamma}}(\omega)$ such that for any $k_j  \geq k_{j_{\epsilon,\gamma}}(\omega)$ we have the KL condition  $F(x_{k_j}(\omega)) - F_{*} \leq \sigma_q \| \nabla F(x_{k_j}(\omega))\|^q$.   Hence,  we cannot infer from this that $F(x_{k_j}) - F_* \leq \sigma_q \| \nabla F(x_{k_j})\|^q$ a.s. for $k_j$ large enough as $k_{j_{\epsilon,\gamma}}(\omega)$ is a random variable which, in general,  cannot be  bounded uniformly on $\Omega$. However, using the Egorov’s theorem (see Exercise 16 in Chapter 3 in \cite{Rud:87} or  Theorem 4.5 in \cite{MauFadAtt:22}), we have that for any  $\bar  \delta>0$ there exists a measurable set $\Omega_{\bar  \delta} \subset \Omega$ satisfying $ \mathbb{P}[\Omega_{\bar  \delta}] \geq 1 - \bar \delta$ such that  $F(x_{k_j})$ converges uniformly to $F_{*}$ and $\nabla F(x_{k_j})$  converges uniformly to $0$  on the set $\Omega_{\bar  \delta}$. Since $F$ satisfies the KL property,  given $\epsilon, \gamma, \bar \delta > 0$, there exists a $k_{j_{ \bar \delta,\epsilon,\gamma}} > 0$ and $\Omega_{\bar \delta} \subset \Omega$ with $P[\Omega_{\bar \delta}] \geq 1 - \bar \delta$ such that  $\text{dist}(x_{k_j}(\omega), X(x_{0})) \leq \gamma, \; F_* < F(x_{k_j}(\omega)) < F_* + \epsilon$ and  
			\begin{equation}
				\label{eq:139}
				F(x_{k_j}(\omega)) - F_*  \leq \sigma_q \|\nabla F(x_{k_j}(\omega))\|^q  \qquad \forall k_j \geq k_{j_{\bar  \delta,\epsilon,\gamma}} \text{ and } \omega \in \Omega_{\bar  \delta}.
			\end{equation}
	
			\noindent Equivalently, using the indicator function for $ \Omega_{\bar  \delta}$, denoted $	\mathbbm{1}_{\Omega_{\bar  \delta}}$, we have:
			\begin{equation}
				\label{eq:83}
				\mathbbm{1}_{\Omega_{ \bar \delta}}	\left(  F(x_{k_j}) - F_* \right)^{\frac{2}{q}} \leq  \mathbbm{1}_{\Omega_{ \bar \delta}} \sigma_q^{\frac{2}{q}} \|\nabla F(x_{k_j})\|^2 \quad \forall k_j \geq k_{j_{\bar \delta,\epsilon,\gamma}}. 
			\end{equation}

		\noindent From Lemmas \ref{lem1}, \ref{lem2} and relation \eqref{eq:40}, we have: 
		\begin{align*}
			C \|\nabla F(x_{k_{j}})\|^2 & \leq \dfrac{\alpha^2 \eta_{\min}}{2 \HF^2} \| \nabla F(x_{k_{j}})\|^2 \leq  \dfrac{\eta_{\min}}{2} \|d_{k_{j}}\|^2   \leq  F(x_{k_{j}}) - F(x_{k_{j}+1}). 
		\end{align*}	
	
	\noindent This implies that 
	\begin{equation*}
		\mathbbm{1}_{\Omega_{\bar \delta}} \|\nabla F(x_{k_{j}})\|^2 \leq \mathbbm{1}_{\Omega_{\bar  \delta}} \dfrac{1}{C}	\left(  F(x_{k_{j}}) - F(x_{k_{j}+1})\right). 
	\end{equation*}
		
		\noindent Combining the inequality above with the KL property \eqref{eq:83},  we get:
		\begin{equation}
			\mathbbm{1}_{\Omega_{ \bar \delta}} \left(F(x_{k_{j}}) - F_{*}\right)^{\frac{2}{q}}  \leq \mathbbm{1}_{\Omega_{ \bar \delta}}  \sigma_{q}^{\frac{2}{q}} \|\nabla F(x_{k_{j}})\|^{2} \leq \mathbbm{1}_{\Omega_{\bar  \delta}}  \left(\dfrac{\sigma_{q}^{\frac{2}{q}}}{C}\right)\left( F(x_{k_{j}}) - F(x_{k_{j}+1})\right).  \label{eq:70}
		\end{equation}
		
		
		\noindent Since $(F(x_{k}))_{k\geq0}$ is a decreasing sequence (see Lemma \ref{lem1}),  we have $F(x_{k_{j+1}}) \leq F(x_{k_{j}+1})$ and then from \eqref{eq:70}   we get  the following recurrence valid for all $k_j \geq k_{j_{\bar  \delta,\epsilon,\gamma}}$:
		\begin{equation}
		\mathbbm{1}_{\Omega_{\bar  \delta}} 	\left(F(x_{k_{j}}) - F_{*}\right) - \mathbbm{1}_{\Omega_{\bar  \delta}} \left(F(x_{k_{j+1}}) - F_{*}\right) \geq \mathbbm{1}_{\Omega_{\bar  \delta}}  C \sigma_{q}^{-\frac{2}{q}} \left(F(x_{k_{j}}) - F_{*}\right)^{\frac{2}{q}}.  \label{eq:64}
		\end{equation}

		\noindent First,  let us consider that $q\in(1,2)$.
		Multiplying both sides of the inequality \eqref{eq:64}  by $\left(C \sigma_{q}^{-\frac{2}{q}}\right)^{\frac{q}{2-q}}$,  denoting  $\Delta_{k_{j}} = \mathbbm{1}_{\Omega_{\bar  \delta}} \gamma_{1}^{\frac{q}{2-q}}\left(F(x_{k_{j}}) - F_{*}\right)$ and  considering $\gamma_{1}$ defined in \eqref{eq:48}, we obtain:
		\begin{equation}
			\Delta_{k_{j}} - \Delta_{k_{j+1}} \geq  (\Delta_{k_{j}})^{\frac{2}{q}}   \quad \forall k_j \geq k_{j_{\bar \delta,\epsilon,\gamma}}.  \label{eq:52}
		\end{equation} 
	 Considering the second inequality in \eqref{eq:61} for $\zeta = \frac{2-q}{q}>0$,  we get:
			\begin{eqnarray*}
				& \Delta_{k_J} &\leq \dfrac{1}{\left( \zeta (J- j_{\bar \delta,\epsilon,\gamma})\right)^\frac{1}{\zeta}} 
				 \iff \mathbbm{1}_{\Omega_{\bar  \delta}} \gamma_{1}^{\frac{q}{2-q}}\left(F(x_{k_{J}}) - F_{*}\right) \leq \dfrac{ 1}{\left( \frac{(J- j_{\bar \delta,\epsilon,\gamma})(2-q)}{q} \right)^\frac{q}{2-q}} \\
				& \iff&  \mathbbm{1}_{\Omega_{ \bar \delta}}\left(  F(x_{k_{J}}) - F_{*}\right)  \leq \dfrac{ 1}{\left( \frac{(J- j_{\bar \delta,\epsilon,\gamma})(2-q)}{q} \gamma_{1}\right)^\frac{q}{2-q}}
			\end{eqnarray*}
			
		 \noindent Using $|\mathbb{A}_{K}| = J+1$, we obtain:
			\begin{align}
				\label{eq:12}
				 \mathbbm{1}_{\Omega_{\bar  \delta}}\left(  F(x_{k_{J}}) - F_{*}\right) \leq \dfrac{1}{ \left(\frac{\left( |\mathbb{A}_{K}|-1 - j_{\bar \delta,\epsilon,\gamma}\right)(2-q)}{q} \gamma_{1}\right)^{\frac{q}{2-q}}}.
			\end{align}
			
			\noindent From Lemma \ref{lem3}, we have  for all $\beta \in (0,1)$:
			\begin{equation}
				\mathbb{P}[|\mathbb{A}_{K}|- 1 \geq (1- \beta)(1 - \delta)(K+1) - 1] \geq 1 - e^{-\frac{\beta^2}{2}(1-\delta)(K+1)}.  \label{eq:66}
			\end{equation}
			Hence, 	 if $K > \dfrac{1+j_{\bar \delta,\epsilon,\gamma}}{(1-\beta)(1-\delta)} - 1$, we get:
			\begin{equation}
				\mathbb{P}\left[ \dfrac{1}{|\mathbb{A}_{K}|- 1 - j_{\bar \delta,\epsilon,\gamma}} \leq \dfrac{1}{(1- \beta)(1 - \delta)(K+1) - 1 - j_{\bar \delta,\epsilon,\gamma} } \right]  \geq 1 - e^{-\frac{\beta^2}{2}(1-\delta)(K+1)}. \label{eq:47}
			\end{equation}
			
			\noindent From \eqref{eq:12} and the inequality above, we get 
			\begin{equation*}
				\mathbb{P}\left[\mathbbm{1}_{\Omega_{\bar  \delta}}\left(  F(x_{k_{J}}) - F_{*}\right) \leq \dfrac{1}{ \left( \gamma_{1}C_{1}\left(C_{2}(K+1) - (j_{\bar \delta,\epsilon,\gamma}+1) \right) \right)^{\frac{q}{2-q}}} \right]  \geq 1 - e^{-\frac{\beta^2}{2}(1-\delta)(K+1)}.
			\end{equation*}
		
			\noindent Using basic probabilistic arguments, we have that: 
			\begin{equation*}
				\mathbb{P}\left[ F(x_{k_{J}}) - F_{*} \leq \dfrac{1}{ \left( \gamma_{1}C_{1}\left(C_{2}(K+1) - (j_{\bar \delta,\epsilon,\gamma}+1) \right) \right)^{\frac{q}{2-q}}} \right]  \geq 1 - e^{-\frac{\beta^2}{2}(1-\delta)(K+1)}.
			\end{equation*}
			Since the sequence $(F(x_{k}))_{k\geq 0}$ is decreasing, the first statement of the theorem follows.  
			
			
			\medskip 
			
			\noindent Second, let us consider $q=2$. From  \eqref{eq:62} and \eqref{eq:64}, we have: 	
			\begin{align*}
				\mathbbm{1}_{\Omega_{\bar \delta}}\left( F(x_{k_{J}})- F_*\right)  \leq \mathbbm{1}_{\Omega_{ \bar \delta}}\left(1-\dfrac{C}{\sigma_{2}}\right)^{J-j_{\bar \delta,\epsilon,\gamma}} (F(x_{k_{j_{\bar \delta,\epsilon,\gamma}}}) - F_*). 
			\end{align*}
			
			
			\noindent Using $|\mathbb{A}_{K}| = J+1$, we obtain: 
			\begin{align}
				\mathbbm{1}_{\Omega_{\bar \delta}}\left( F(x_{k_{J}})- F_*\right) \leq \mathbbm{1}_{\Omega_{ \bar \delta}} \gamma_2^{|\mathbb{A}_{K}|-(j_{\bar \delta,\epsilon,\gamma} +1)} (F(x_{ k_{\bar j_{\delta,\epsilon,\gamma}}  }) - F_*). \label{eq:50}
			\end{align}
			with $\gamma_2$ defined in \eqref{eq:48}.  If $K > \dfrac{1+j_{\bar \delta,\epsilon,\gamma}}{(1-\beta)(1-\delta)} - 1$ by $\eqref{eq:66}$, we get: 
			\begin{equation*}
				\mathbb{P}\left[ \! \gamma_2^{|\mathbb{A}_{K}|- (j_{\bar \delta,\epsilon,\gamma} + 1)}   \leq  \gamma_2^{(1- \beta)(1 - \delta)(K+1) - (j_{\bar \delta,\epsilon,\gamma} + 1)}\right] \! \geq 1 - e^{-\frac{\beta^2}{2}(1-\delta)(K+1)}. 
			\end{equation*}
			\noindent  Using this probability bound in   \eqref{eq:50},   
			we get
			\begin{equation*}
			\mathbb{P}\left[ \! \mathbbm{1}_{\Omega_{\bar  \delta}}\left( F(x_{k_{J}})- F_*\right) \leq \mathbbm{1}_{\Omega_{ \bar \delta}} \gamma_2^{(1- \beta)(1 - \delta)(K+1) - (j_{\bar \delta,\epsilon,\gamma} + 1)}  (F(x_{k_{j_{\bar \delta,\epsilon,\gamma}}}) - F_*)    \right] \! \geq 1 - e^{-\frac{\beta^2}{2}(1-\delta)(K+1)}. 
		\end{equation*}
			
		\noindent Using  that $F(x_k) \leq F(x_0)$ for all $k \geq 0$, we finally get:  	
		\begin{equation*}
			\mathbb{P}\left[ \!  F(x_{k_{J}})- F_* \leq  \gamma_2^{(1- \beta)(1 - \delta)(K+1) - (j_{\bar \delta,\epsilon,\gamma} + 1)}  (F(x_{0}) - F_*)    \right] \! \geq 1 - e^{-\frac{\beta^2}{2}(1-\delta)(K+1)}. 
		\end{equation*}	
			
			\noindent Finally, since  $(F(x_{k}))_{k\geq 0}$ is decreasing, the second statement of the theorem also follows.  
			
			\medskip 
			
			\noindent Third, if $q>2$, using $F(x_{k_{j+1}}) \leq F(x_{k_{j}})$ in \eqref{eq:52}, we obtain:
			\begin{equation*}
				\Delta_{k_{j}} - \Delta_{k_{j+1}} \geq  (\Delta_{k_{j+1}})^{\frac{2}{q}}.
			\end{equation*} 
			Using  $\zeta = \frac{2}{q}-1 \in (-1,0)$, then we get \eqref{eq:63},   which yields the third  statement of the theorem.  \Halmos
	\endproof

	\medskip 
	
	\begin{remark}
	Note that if the function $F$ satisfies the Polyak-Lojasiewicz (PL)  inequality or it is uniformly convex of degree $q > 1$, then  the KL inequality  \eqref{eq:kl}  holds for $F$ at the minimum value $F^{*}$ for all $x \in \mathbb{R}^{n}$ (see  \cite{BolDan:07} for the  PL and 	\eqref{eq:probConv} for  the uniform convex functions). Hence, in these settings we have a global relation of the form  \eqref{eq:kl} for $F$, not a local one as in the general KL property.  Under these settings (i.e., PL or uniform convex functions) the results of the previous Theorem 	\ref{theo:KL1} are stronger as they hold  for all $K > 1/((1-\beta)(1-\delta)) - 1$ and for $\bar \delta =0$.  Moreover, similar conclusions as in Remark \ref{rem1} can be derived regarding the dependence on $N$  for the convergence rates obtained in  Theorem 	\ref{theo:KL1}.  
	\end{remark}

	\section{Convergence analysis: convex case}
	In this section we assume  that  the composite objective function  $F=  f + \psi$ is convex. Note that we do not need to impose convexity on $f$ and $\psi$ separately. Denote the set of optimal solutions of \eqref{eq:prob} by $X^{*}$ and let  $x^{*}$ be an element of this set. Define also: 
	\begin{equation*}
		R := \max_y \min_{x^{*} \in X^{*}} \{\|y - x^{*}\|: F(y) \leq F(x_{0})\}, 
	\end{equation*} 
	which is a measure of the size of the sublevel set of $F$ in $x_{0}$.  	Recall that  $k_{0}$ denotes the first well-aligned iteration. In these settings  we get the following convergence result.  
	
	\begin{theorem}
		\label{the:conv}
	If Assumptions \ref{ass2} and  \ref{ass1} hold, the sublevel set $\mathcal{L}_{f}(x_{0})$ is bounded and additionally  $F$ is  convex, then for any number of iterations $K$ satisfying 
			\begin{equation*}
				K > \max\left\{ \dfrac{1}{(1-\beta)(1-\delta)} - 1, k_{0}\right\}
		\end{equation*}		
		\noindent the sequence $(x_k)_{k\geq 0}$ generated by algorithm SCPG  has with high-probability the following sublinear rate in function values:  
		{\small\begin{equation}
				\mathbb{P}\left[
				F(x_{K}) - F(x^{*}) \leq \dfrac{1}{C}\left(\dfrac{R^2}{(1- \beta)(1 - \delta)(K+1) - 1}\right)
				\right]  \geq 1 - e^{-\frac{\beta^2}{2}(1-\delta)(K+1)}.  \label{eq:17}
		\end{equation}}	
	\end{theorem}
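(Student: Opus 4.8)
The plan is to convert the descent guarantee of Lemma~\ref{lem1} into a quadratic decrease of the function-value gap along the well-aligned iterations, and then to close the recursion using convexity of $F$ and the high-probability lower bound on $|\mathbb{A}_{K}|$ from Lemma~\ref{lem3}.

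First I would show that along every well-aligned index $k\in\mathbb{A}_{K}$ the objective decreases by at least a multiple of $\|\nabla F(x_{k})\|^{2}$. Chaining the descent inequality $F(x_{k})-F(x_{k+1})\geq\frac{\eta_{U_{k}}}{2}\|d_{k}\|^{2}\geq\frac{\eta_{\min}}{2}\|d_{k}\|^{2}$ of Lemma~\ref{lem1} with the bound $\|\nabla F(x_{k})\|^{2}\leq\frac{2(\Hpsi^{2}+\Hf^{2})}{\alpha}\|d_{k}\|^{2}\leq\frac{2(\Hpsimax^{2}+\Hfmax^{2})}{\alpha}\|d_{k}\|^{2}$ of Lemma~\ref{lem2}, and recalling the definition of $C$ in~\eqref{eq:40}, yields $F(x_{k})-F(x_{k+1})\geq C\|\nabla F(x_{k})\|^{2}$ for all $k\in\mathbb{A}_{K}$. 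Since $x_{k}\in\mathcal{L}_{F}(x_{0})$ by Lemma~\ref{lem1}, convexity of $F$ gives $F(x_{k})-F(x^{*})\leq\langle\nabla F(x_{k}),x_{k}-x^{*}\rangle\leq R\|\nabla F(x_{k})\|$, where $x^{*}$ is taken to be the projection of $x_{k}$ onto $X^{*}$ and $R=R(x_{0})$ bounds the distance from any point of the sublevel set to $X^{*}$. Combining the two inequalities, for every $k\in\mathbb{A}_{K}$ we obtain $F(x_{k})-F(x_{k+1})\geq\frac{C}{R^{2}}\bigl(F(x_{k})-F(x^{*})\bigr)^{2}$.

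Next I would pass to the subsequence of well-aligned iterates. Writing $\mathbb{A}_{K}=\{k_{0}<k_{1}<\dots<k_{J}\}$, so that $J=|\mathbb{A}_{K}|-1$, and setting $\Delta_{j}:=F(x_{k_{j}})-F(x^{*})$, monotonicity of $(F(x_{k}))_{k\geq0}$ from Lemma~\ref{lem1} gives $\Delta_{j+1}=F(x_{k_{j+1}})-F(x^{*})\leq F(x_{k_{j}+1})-F(x^{*})$, hence the recurrence $\Delta_{j}-\Delta_{j+1}\geq\frac{C}{R^{2}}\Delta_{j}^{2}$. After rescaling by $C/R^{2}$ this is exactly the recurrence of Lemma~\ref{lemma:rec}(i) with $c=1$ and $\zeta=1$, so $\frac{C}{R^{2}}\Delta_{J}\leq\frac{1}{J}$, that is $F(x_{k_{J}})-F(x^{*})\leq\frac{R^{2}}{C(|\mathbb{A}_{K}|-1)}$; using monotonicity once more together with $K\geq k_{J}$, the same bound holds for $F(x_{K})-F(x^{*})$.

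Finally I would invoke Lemma~\ref{lem3}: with probability at least $1-e^{-\frac{\beta^{2}}{2}(1-\delta)(K+1)}$ one has $|\mathbb{A}_{K}|\geq(1-\beta)(1-\delta)(K+1)$, while the hypothesis $K>\frac{1}{(1-\beta)(1-\delta)}-1$ makes the denominator $(1-\beta)(1-\delta)(K+1)-1$ positive; on this event the deterministic estimate above gives $F(x_{K})-F(x^{*})\leq\frac{R^{2}}{C\bigl((1-\beta)(1-\delta)(K+1)-1\bigr)}$, which is~\eqref{eq:17}. The argument is largely bookkeeping: the point requiring care is that the quadratic recursion must be set up along the well-aligned subsequence (the other iterations contribute only a further decrease, exploited through monotonicity of $(F(x_{k}))$), and one should dispose of the degenerate cases where some $\Delta_{j}=0$, i.e.\ the optimum is already reached (the bound is then trivial), and where $|\mathbb{A}_{K}|\leq1$ (absorbed into the complementary probability).
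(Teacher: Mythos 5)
Your proposal is correct and follows essentially the same route as the paper's own proof: the combination of Lemma \ref{lem1} and Lemma \ref{lem2} to get $F(x_{k})-F(x_{k+1})\geq C\|\nabla F(x_{k})\|^{2}$ on well-aligned iterations, the convexity bound $F(x_{k})-F(x^{*})\leq R\|\nabla F(x_{k})\|$, the quadratic recurrence along the well-aligned subsequence closed via Lemma \ref{lemma:rec}(i) with $\zeta=1$, and finally Lemma \ref{lem3} for the probability statement. Your explicit handling of the degenerate cases ($\Delta_{j}=0$ or $|\mathbb{A}_{K}|\leq 1$) is a small tidiness improvement over the paper's write-up, but the argument is the same.
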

	
	\proof{Proof}
From  Lemma \ref{lem2}, for $k\in\mathbb{A}_{K}$, we have that:  
		\begin{equation*}
			\|d_{k}\|^{2} \geq \dfrac{\alpha^2}{\HF^2} \|\nabla F({x_{k}})\|^{2}.
		\end{equation*}  
		
		\noindent Since  $\mathbb{A}_{K}=\{k_{0},...,k_{J}\}$. Combining the inequality above, Lemma \ref{lem1} and relation \eqref{eq:40},  we get:
		\begin{equation}
			F(x_{k_{j}}) - F(x_{k_{j}+1}) \geq \dfrac{\eta_{\min}}{2} \|d_{k_{j}}\|^2 \geq C \|\nabla F({x_{k_{j}}})\|^2 .  \label{eq:49}
		\end{equation}

		\noindent On the  other hand, since $F$ is convex, we have:
		\begin{eqnarray*}
			F(x^{*}) - F(x_{k_{j}}) \geq \langle \nabla F(x_{k_{j}}), x^{*} - x_{k_{j}+1} \rangle &\geq& - \| \nabla F(x_{k_{j}})\| \|x_{k_{j}+1} - x^{*} \| \\
			&\geq& - \| \nabla F(x_{k_{j}})\| R.
		\end{eqnarray*}
		
		\noindent Hence
		\begin{equation}
			\|\nabla F(x_{k_{j}})\| \geq \dfrac{F(x_{k_j}) - F(x^{*})}{R}. \label{eq:19}
		\end{equation}
		
		\noindent By \eqref{eq:49} and \eqref{eq:19}, we obtain: 
		\begin{equation*}
			\left(F(x_{k_{j}}) - F(x^{*})\right)  - \left(F(x_{k_{j}+1}) - F(x^{*})\right)  \geq  C \dfrac{\left( F(x_{k_j}) - F(x^{*})\right)^{2}}{R^{2}}.
		\end{equation*} 
		
		\noindent Since $(F(x_{k}))_{k\geq0}$ is a decreasing sequence (see Lemma \ref{lem1}),  we have $F(x_{k_{j+1}}) \leq F(x_{k_{j}+1})$ and thus:
		\begin{equation*}
			\left(F(x_{k_{j}}) - F(x^{*})\right)  - \left(F(x_{k_{j+1}}) - F(x^{*})\right)  \geq  C \dfrac{\left( F(x_{k_{j}}) - F(x^{*})\right)^{2}}{R^{2}}.
		\end{equation*}   
		
		\noindent Multiplying both sides by $C/R^{2}$, we further get: 
		\begin{equation*}
			\dfrac{C \left(F(x_{k_{j}}) - F(x^{*})\right)}{R^{2}}  - \dfrac{C \left(F(x_{k_{j+1}}) - F(x^{*})\right)}{R^{2}}  \geq  \left[\dfrac{C \left( F(x_{k_{j}}) - F(x^{*})\right)}{R^{2}}\right]^{2}.
		\end{equation*} 
		
		\noindent We denote: 
		\begin{equation*}
			\dfrac{C \left( F(x_{k_{j}}) - F(x^{*})\right)}{R^{2}} = \Delta_{k_{j}}.
		\end{equation*}
		
		\noindent Then, we obtain the following recurrence: 
		\begin{equation*}
			\Delta_{k_{j}} - \Delta_{k_{j+1}} \geq \left(\Delta_{k_{j}}\right)^{2}.
		\end{equation*}
		
		\noindent  From Lemma \ref{lemma:rec}, relation \eqref{eq:61},  we obtain $\Delta_{k_{J}} \leq 1/J$,     or equivalently:  
		\begin{align*}
			\dfrac{C\left( F(x_{k_{J}}) - F(x^{*})\right)}{R^{2}} \leq \dfrac{1}{|\mathbb{A}_{K}| - 1}  
			\iff  F(x_{k_{J}}) - F(x^{*}) \leq \dfrac{1}{C}\left(\dfrac{ R^2}{|\mathbb{A}_{K}| - 1}\right).
		\end{align*}
		
		\noindent Using again the fact that $(F(x_{k}))_{k\geq0}$ is decreasing, we get: 
		\begin{equation}
			F(x_{K}) - F(x^{*}) \leq \dfrac{1}{C}\left(\dfrac{ R^2}{|\mathbb{A}_{K}| - 1}\right). \label{eq:35}
		\end{equation}
		
		\noindent On the other hand, from Lemma \ref{lem3} we have: 
		\begin{equation*}
			\mathbb{P}\left[\dfrac{1}{|A_{K}|- 1} \leq \dfrac{1}{(1- \beta)(1 - \delta)(K+1) - 1}\right] \geq 1 - e^{-\frac{\beta^2}{2}(1-\delta)(K+1)}. 
		\end{equation*}
		
		\noindent for all $\beta \in (0,1)$. If $K > \dfrac{1}{(1-\beta)(1-\delta)} - 1$, then $(1- \beta)(1 - \delta)(K+1) - 1>0$ and from \eqref{eq:35}  our convergence rate  \eqref{eq:17} follows.  \Halmos
	\endproof
	
	\medskip 
		
	\noindent  We notice that the dependence on $N$, where $N=n/p$,  in the convergence rate in function values from Theorem \ref{the:conv}  for  algorithm SCPG  is for scaled sampling matrices  of order:
	\begin{align*}
		\mathcal{O} \left( \frac{\nu \sqrt{N}  \HFmax^2  }{\eta_{\min} k}\right),
	\end{align*} 
	while for Johnson-Lindenstrauss matrices of  order: 
	\begin{align*}
		O\left(\frac{ \HFmax^2}{ \eta_{\min} k}\right).
	\end{align*} 
	

	
\noindent Hence, similar conclusions as in Remark \ref{rem1} can be also derived  in this case. Finally,  it is important to note that an immediate byproduct  of our previous convergence analysis (Theorems 1, 2 and 3)  leads to novel convergence results and rates for the popular alternating minimization (Gauss-Seidel) algorithm with a proper regularization (i.e., $f=0$) in the convex and nonconvex settings  with  general sampling strategies.

\section{Simulations}
\noindent	In the numerical  experiments we consider several applications:  the subproblem in the cubic Newton method \cite{NesPol:06}, the smallest eigenvalue of a matrix \cite{CarDuc:17} and the  logistic regression with quadratic or cubic regularization \cite{Mit:97}. In the sequel, we describe these problems  and present extensive numerical results. Note  that our composite problem \eqref{eq:prob} permits to handle general coupling functions $\psi(x)$,   with $x= (x_1,\cdots ,x_N)$, e.g.:  (i) $\psi(x) = \| A x\|^\ell$, with $\ell \geq 2$ and $A$ linear operator (in particular, $\psi(x_1,x_2) = \|A_1x_1 - A_2 x_2\|^\ell$, see \cite{LuFreNes:18}); (ii) when solving the  subproblem in higher order methods (including cubic Newton) recently popularized by Nesterov \cite{Nes:19} (where $\psi(x) = \|x\|^\ell$); (iii)  when  minimizing an objective function that is relatively smooth w.r.t. some (possibly unknown) function $h$, see \cite{LuFreNes:18}.  \red{In all the simulations, we choose  scaled sampling matrices $U_{k}$ and for a random algorithm we report the average time/full iterations after 5 runs.}

	
\subsection{Cubic Newton subproblem}
		\noindent  One of our main motivations for analyzing coordinate descent schemes for composite problems with nonseparable terms  came from the need of having fast algorithms for solving  the subproblem in the cubic Newton method \cite{NesPol:06}, which is supported by global efficiency estimates for general classes of optimization problems. Recall that in  the cubic Newton method at each iteration one needs to minimize an objective function of the form:
	\begin{align}
		\label{opt_prob}
		\min_{x \in \mathbb{R}^n} F(x)   :=  \frac{1}{2}   \langle Ax, x \rangle  + \langle b, x \rangle + \frac{M}{6} \|x \|^3,
	\end{align}
	where $A \in \mathbb{R}^{n\times n}, b\in\mathbb{R}^{n}$ and $M>0$ are given. As discussed in Section 2,  the function   $\psi(x) = \frac{M}{6} \|x \|^3$ is uniformly convex, but it is nonseparable. Moreover, in this case $f(x) =  \langle b, x \rangle + \frac{1}{2}   \langle Ax, x \rangle$ is smooth. Hence, this problem fits into our general model \eqref{eq:prob} and we can use algorithm SCPG to solve it. 

 \medskip 

\noindent 	\textbf{Implementations details:}\\
In the SCPG method, we need to solve the following subproblem at each iteration:
    \begin{align*}
        \min_{d\in \mathbb{R}^{n}}  \dfrac{M}{6}\|x + Ud\|^3 + \langle U^{T} \nabla f(x), d \rangle + \dfrac{H_{f,U}}{2}\|d\|^2.
    \end{align*}

    \noindent   Hence,  from the optimality condition of the subproblem above, we obtain: 
    \begin{equation}
    \label{eq:22}
    U^{T}\nabla f(x)  + \HfU d^{*} + \dfrac{M}{2} \|x+Ud^{*}\| U^{T}(x+Ud^{*}) = 0.  
    \end{equation}
    From the equality above, we have:
    \begin{equation}
    \label{eq:20}
        \left(\HfU I_{p} + \dfrac{M}{2} \|x+Ud^{*}\| U^{T}U \right) d^{*} = - \left(U^{T}\nabla f(x) + \dfrac{M}{2} \|x+Ud^{*}\| U^{T} x \right).
    \end{equation}

\noindent Note that, if we know  $\mu = \|x+Ud^{*}\|$, we can solve the linear system of equations \eqref{eq:20} very efficiently in order to obtain  $d^{*}$, even  in $\mathcal{O}(p)$ operations.  Indeed, if the matrix $U$ is chosen as scaled sampling matrix, then $U^TU$, via some permutation,  is a block diagonal matrix with blocks rank one matrices of the form $\mathbb{1}_{p_j} \mathbb{1}_{p_j}^T$, with $\mathbb{1}_{p_j}\in \mathbb{R}^{p_{j}}$ a vector of ones, such that $\sum_{j=1}^{\bar{p}} p_{j} = p$.  Then, solving \eqref{eq:20} is equivalent to finding solutions to  linear  systems of the form:
    \begin{align}
        \label{eq:21}
        \left(H_{f,U}I_{p_{j}} +\dfrac{M\mu n}{2p} \mathbb{1}_{p_j} \mathbb{1}_{p_j}^T\right) d^{(j)} = g^{(j)} \quad \forall j=1:\bar{p}. 
    \end{align}

  \noindent When $p_{j}=1$ the solution of \eqref{eq:21} is trivial to compute. When $p_{j}>1$ the linear system \eqref{eq:21} can be solved in $\mathcal{O}(p_j)$ operations. Indeed, \eqref{eq:21} is equivalently to:
    \begin{align*}
&H_{f,U}\mathbb{1}_{p_j}^{T}d^{(j)} +\dfrac{M \mu n}{2p} (\mathbb{1}_{p_j}^{T}\mathbb{1}_{p_j}) \mathbb{1}_{p_j}^{T}d^{(j)} = \mathbb{1}_{p_j}^{T}g^{(j)} \;\; \Leftrightarrow \;\; 
        \mathbb{1}_{p_j}^{T}d^{(j)} = \dfrac{\mathbb{1}_{p_j}^{T}g^{(j)}}{H_{f,U} + (\mathbb{1}_{p_j}^{T}\mathbb{1}_{p_j})M\mu n/2p}.
    \end{align*}
    Knowing now the value of $\mathbb{1}_{p_j}^{T}d^{(j)}$, we can compute easily $d^{(j)}$ as:
    \begin{align*}
        d^{(j)} = \dfrac{1}{H_{f,U}} \left( g^{(j)} - \dfrac{M\mu n}{2p}(\mathbb{1}_{p_j}^{T}d^{(j)}) \mathbb{1}_{p_j}\right). 
    \end{align*}
    
 \noindent  Note that the total cost of computing $d^{(j)}$ is $\mathcal{O}(p_j)$ and thus the total cost of computing the solution of the system \eqref{eq:20} is $\mathcal{O}(p)$. Further, let us see how we can compute $\mu$.  Note that if  $U$ is the scaled sampling matrix, then $UU^{T}$ is a diagonal matrix with $\bar{p} \leq p$ nonzero entries on the diagonal. Then, from   \eqref{eq:22} we have:
    \begin{equation*}
       x+Ud^{*} =  \left(\HfU I_{n} + \dfrac{M\mu}{2}UU^{T} \right)^{-1}\left( \HfU x - UU^{T}\nabla f(x)\right),
    \end{equation*}
and it is easy to compute the inverse of the above matrix, since it is just a diagonal matrix.  Let us denote $J$ the index set corresponding to the indices of the nonzeros entries in the diagonal of $UU^{T}$ and let $u_{j}$ be  the corresponding values,  with $j\in J$. Taking the norm square in the equality above and using the notation $\mu= \|x+Ud^{*}\|$, we obtain the following nonlinear equation in $\mu$:
    \begin{align*}
        \mu^{2} = \sum_{j \in J} \dfrac{\left(\HfU x^{(j)} - u^{(j)} \nabla_{j}f(x)\right)^2}{\left(\HfU + M\mu u^{(j)}/2 \right)^2} +\sum_{i=1,i\notin J}^{n} (x^{(i)})^2.
    \end{align*}
Then, $\mu$ is the positive root of the above scalar nonlinear equation. 
Note that there are very efficient methods for finding the root of a scalar nonlinear equation, e.g., Newton method. Hence, for these settings, the overall complexity in the SCPG algorithm at each iteration is usually proportional to $p$. {Similarly,   when $\psi(x) = |a^{T}x|^\ell$, with $\ell>2$, and $U$ is chosen as scaled sampling matrix, we can also solve the corresponding subproblems efficiently in the SCPG algorithm combining the same reasoning as above with   the Sherman-Morrison formula.}
\color{black}

\medskip 

\noindent We compare algorithm SCPG with the methods proposed in \cite{CarDuc:17} and \cite{Nes:19} which were designed  for solving  subproblem \eqref{opt_prob}.   Note that, since  SCPG recovers  Algorithm 46 in \cite{Nes:19}, we can also apply that method  in the nonconvex case.   Recall that the update rule in \cite{CarDuc:17} is given by: 
	\begin{equation*}
		x_{k+1} = (I - \eta A - \dfrac{M}{2}\eta\|x_{k}\|I)x_{k} - \eta b,
	\end{equation*}
	
	\noindent with $0 < \eta \leq \dfrac{1}{4\|A\| + 2MR}$ and $R = \dfrac{\|A\|}{M} + \sqrt{\dfrac{\|A\|^2}{M^2} + \dfrac{2\|b\|}{M}}$. Moreover, the update rule in \cite{Nes:19} is:
	\begin{equation*}
		x_{k+1} = \dfrac{2}{2H + M \mu} (Hx_{k} -Ax_{k}-b),
	\end{equation*}
	
	\noindent with $\|A\| \leq H$ and $\mu \geq 0$ given by the solution of the equation $M/2\mu^2 + H \mu - \|Hx_{k} -Ax_{k}-b\| = 0$. In our implementations we used the parameters $\eta = \dfrac{1}{4\|A\| + 2MR}$ and $H = \|A\|$. Moreover, in SCPG we considered  $\Hf = \|U^{T}_{k}AU_{k}\|$. Following  \cite{CarDuc:17}, the starting point  is: 
	\begin{equation*}
		x_{0} = -r\dfrac{b}{\|b\|},  \text{ with } r = -\dfrac{b^{T}Ab}{M\|b\|^2} + \sqrt{\left( \dfrac{b^{T}Ab}{M\|b\|^2}\right)^2 + \dfrac{2\|b\|}{M}}.
	\end{equation*}
	
\noindent 	 In the  experiments  the vector $b\in\mathbb{R}^{n}$ is generated from a standard normal distribution $\mathcal{N}(0,1)$ and the matrix $A \in \mathbb{R}^{n \times n}$ was generated as $A = Q^{T}BQ$, where $Q \in \mathbb{R}^{n \times n}$ is an orthogonal matrix  and $B \in \mathbb{R}^{n \times n}$  is a diagonal matrix (with positive entries in the convex case and negative entries in the nonconvex case, respectively). Moreover, we consider different values for $n$ ranging from $10^4$ to $10^6$ and also several values for  $p$ in SCPG.  In Table 1, ``**" means that the corresponding algorithm took more than 7 hours.  Table 1  presents the number of full iterations (number of gradient evaluations $\nabla f$) and the CPU  time, in seconds, for each method in order to achieve:
\begin{equation*}
	\|\nabla F(x_{k})\| \leq 10^{-2}.
\end{equation*}  
From Table 1 we can see that SCPG has much better performance than the full gradient type  algorithms from \cite{CarDuc:17} and \cite{Nes:19} in both number of full iterations and CPU time.

	\begin{table}[h!]
		\centering    
		\begin{tabular}{| c| c| c | c | c | c | c | c | c |}
			\hline
			\multicolumn{9}{|c|}{ Convex} \\
			\hline
			\text{n} & \text{M} & \multicolumn{5}{|c|}{SCPG} & \cite{CarDuc:17} & \cite{Nes:19}    \\  \hline
			& & p=0.1\%n & p=0.5\%n & p=1\%n & p=2.5\%n & p=5\%n & & \\ \hline
			& 1 & 992 & 1254 & 860 & 1169 & 891 & 65564 & 8199   \\
			& & 336.3 & 192.1 & \textbf{109.3} & 168.4 & 126.5 & 1024.1 & 129.3 \\ \cline{2-9} 
			$10^4$ & 0.1 & 5280 & 5372 & 5349 & 3341 & 5904 & 251460 & 31437  \\
			& & 1648.8 & 776.3 & 655.6 & \textbf{470} & 906.7 & 3892.1 & 488  \\ \cline{2-9} 
			& 0.01 & 10660  & 6531 & 8990 & 6778 & 7027 & 918651 & 114836 \\ 
			& & 5030.8 & 1115 & 1002.7 & 822.2 & \textbf{818.2} & 14595 & 1897.9 \\ \hline
			& 1 & 323 & 604 & 607 & 572 & 679 & 40959 & 5124 \\ 	
			& & \textbf{504.2} & 819.8 & 848.6 & 864.3 & 1227.3 & 11483 & 776.5 \\ \cline{2-9}
			$10^5$ & 0.1 & 1387 & 1577 & 1380 & 1388 & 1416 &  & 18508 \\
			& & 2634.4 & 2035 & \textbf{1779.6} & 1955.1 & 2366 & ** & 2568 \\ \cline{2-9}
			& 0.01 & 5453 & 4417 & 5071 & 5035 & 5580 &  & 66966 \\
			& & 6032.8 & \textbf{4366.1} & 5064.4 & 5886.5 & 8025 & ** & 11148 \\ \hline
			& 1 & 1333  & 963 & 1415 &  1293 & 1220 &  & 11896  \\ 
			& & 20275 & 6676 & 8215 & 6214 & \textbf{5424} & ** & 7532 \\ \cline{2-9}
			$10^6$ & 0.1 & 2285 & 2760 & 3154 & 3353 & 3683 & &  \\ 
			&  & 18671  & 18067 &  17235 & \textbf{15014} & 15290 & ** & ** \\ \cline{2-9}
			& 0.01 &  &  & 6881 & 7103 & 7423 &  & 149880 \\ 
			& & ** & ** & 21471 & 17018 & \textbf{12725} & ** & 24986  \\ \hline 
			\multicolumn{9}{|c|}{ Nonconvex} \\
			\hline
			\text{n} & \text{M} & \multicolumn{5}{|c|}{SCPG} & \cite{CarDuc:17} & \cite{Nes:19}    \\  \hline
			& & p=0.1\%n & p=0.5\%n & p=1\%n & p=2.5\%n & p=5\%n & & \\ \hline
			& 1 & 1124 & 1161 & 1069 & 1269 & 1152 & 108892 & 13619   \\
			& & 297.9 & 113.6 & \textbf{79.2} & 119.1 & 107.9 & 1809.5 & 117.8  \\ \cline{2-9} 
			$10^4$ & 0.1 & 3893 & 3583 & 2970 & 3044 & 4166 &  & 530571  \\
			& & 824.5 & 298.7 & \textbf{205.2} & 275.6 & 382.8 & ** & 4562.4  \\ \cline{2-9} 
			& 0.01 & 4385  & 4527 & 4430 & 3916 & 4659 & & \\ 
			&  & 1087.2 & 419.3 & \textbf{315.9} & 345.1 & 405.6 & ** & **  \\ \hline
			& 1 & 773 & 852 & 825 & 816 & 852 & 54725 & 6847 \\ 	
			& & 834.6 & \textbf{762.6} & 821.2 & 945.6 & 1079.8 & 16494 & 1081.5 \\ \cline{2-9}
			$10^5$ & 0.1 & 765 & 2248 & 1640 & 2219 & 1890 &  &  \\
			& & \textbf{907.9} & 2036 & 1577.4 & 2437.7 & 2366.3 & ** & ** \\ \cline{2-9}
			& 0.01 & 3853 & 3792 & 3738 & 3742 & 3862 &  &  \\
			& & 4629.3 & 3695.4 & \textbf{3194.2} & 3606.6 & 4180.6 & ** & ** \\ \hline
			& 1  & 1962 & 1576 & 1451 & 1516 & 1705 &  & 35013 \\
			&    & 20583 & 6039.4 & 4577.3 & 4290.9 & \textbf{4028.8} & ** & 4134.2  \\ \cline{2-9}
			$10^6$ & 0.1 &  4981 & 5290 & 5416 & 5610 & 5957 &  &  \\
			&  & 18095 & 17280 & 15884 & 14385 & \textbf{11688} & ** & **  \\ \cline{2-9}
			& 0.01 &  &  & 6134 & 7004 & 9809 &  &  \\ 
			&  & ** & ** & 19827 & 18501 & \textbf{15138} & ** & ** \\ \hline
		\end{tabular}
		\begin{center}
			\caption{Number of full iterations and CPU time (in seconds) for algorithms SCPG, \cite{CarDuc:17} and \cite{Nes:19} on cubic Newton.}
		\end{center}
		\label{table:4}
	\end{table}

\subsection{Smallest eigenvalue of a matrix}
\noindent In the second set of experiments we want to find the smallest eigenvalue of an indefinite matrix. As proved in \cite{CarDuc:17}, if a matrix $A$ has at least one negative eigenvalue, we can use formulation  \eqref{opt_prob} with $b = 0$ to find the smallest eigenvalue $\lambda_{\min}$. We compare our algorithm with the power method. We consider matrices  $A$ from University of Florida Sparse Matrix Collection \cite{DavHu}, more specific, we consider matrices from the Schenk\underline{ }IBMNA Group. The starting point $x_{0}$ was generated from a standard normal distribution $\mathcal{N}(0,1)$. Moreover, we consider different values for $p$ in the SCPG method. Figures \ref{fig:1} and \ref{fig:2} show the error  $ 	|\lambda_{k} - \lambda_{\min}|$ for the minimum eigenvalue along time, in seconds. As we can see from these figures, SCPG algorithm has much  better performance (i.e., finds an highly  accurate approximation of $\lambda_{\min}$ in much less time)  than power method for the matrices taken from Schenk\underline{ }IBMNA~group.

	\begin{figure}[!ht]
		\centering
		\includegraphics[width=8cm, height=6.5cm]{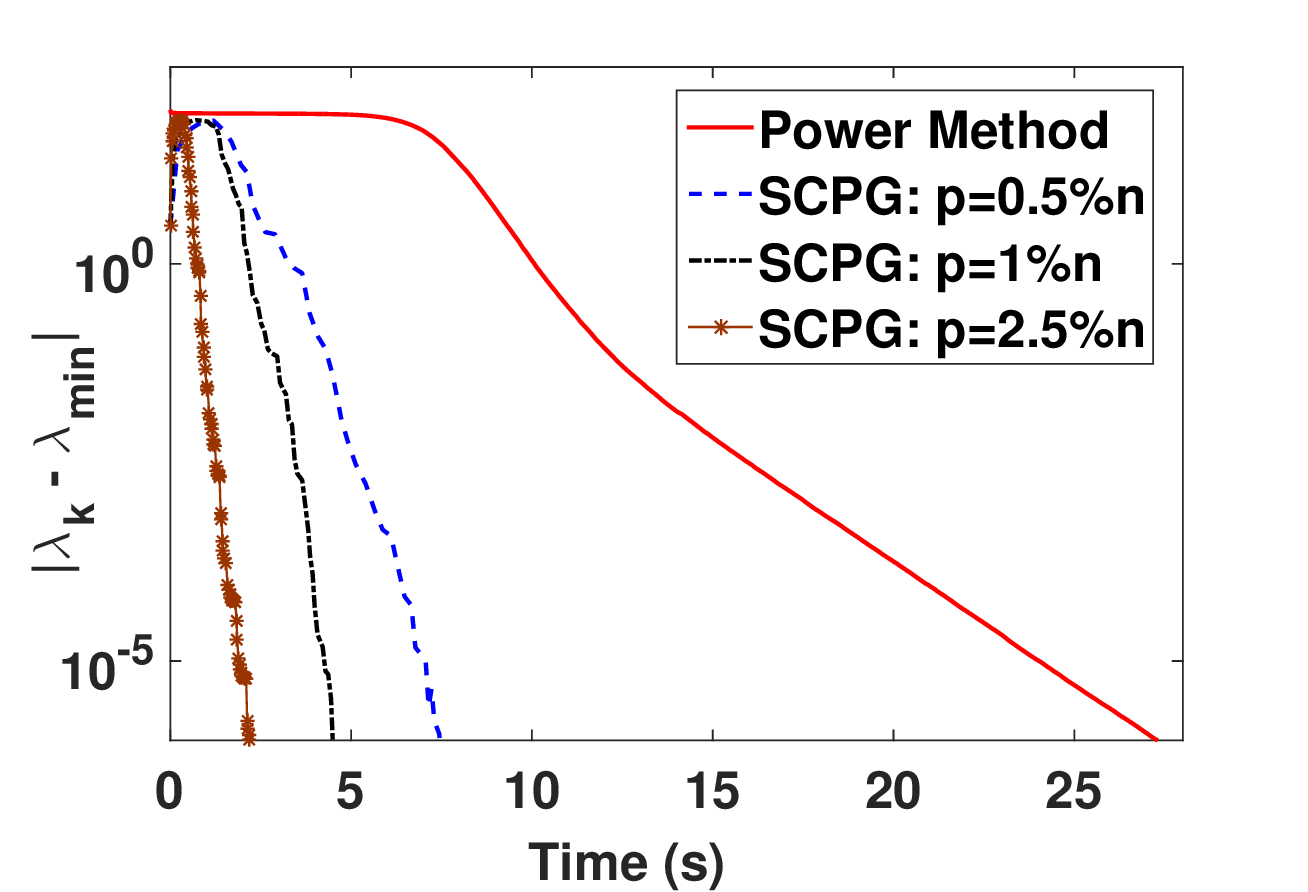}
		\hspace{-0.7cm}
		\includegraphics[width=8cm, height=6.5cm]{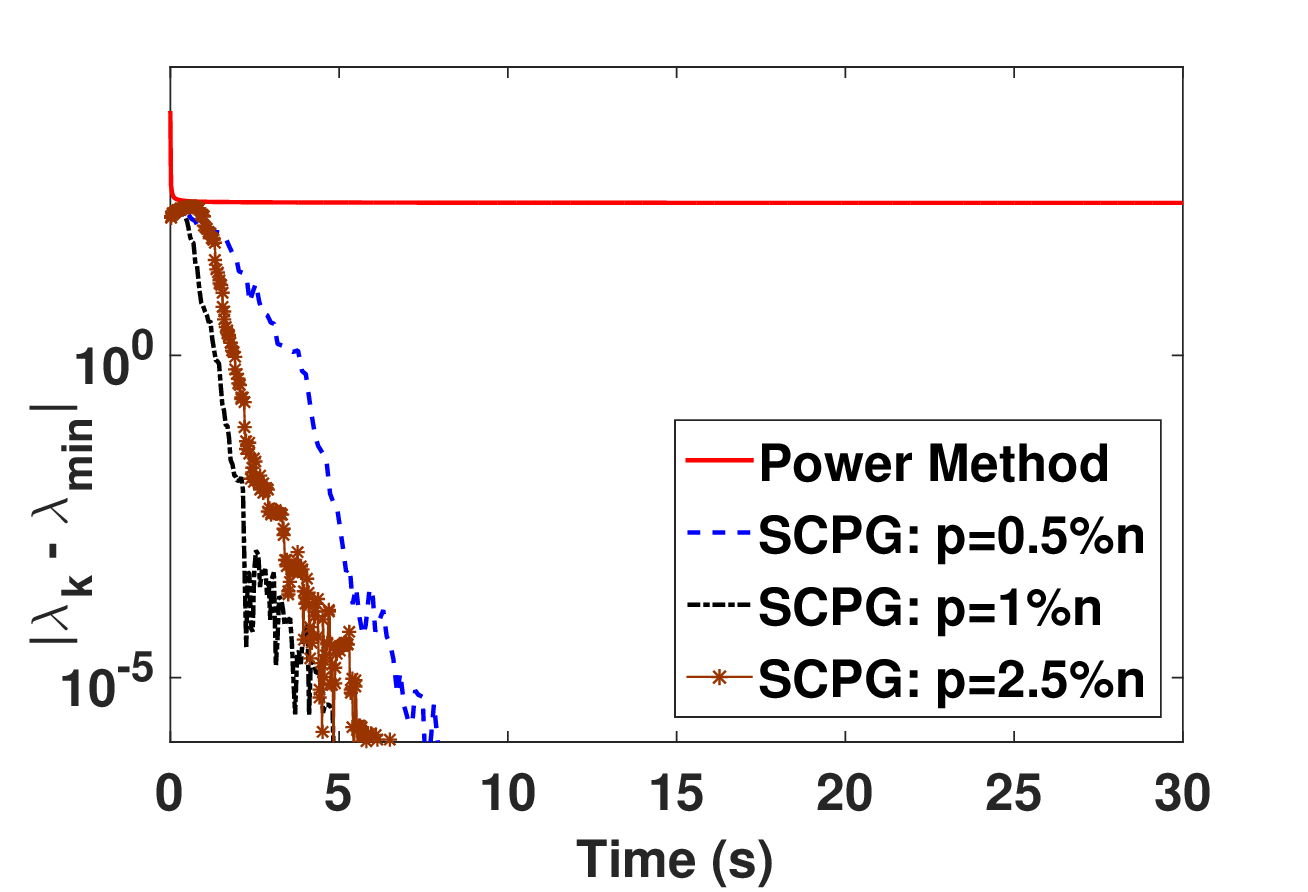} 
		\caption{Comparisson between SCPG and power method for the smallest eigenvalue  on group Schenk\underline{ }IBMNA.  Matrices: c-50 (left) n=22401  and c-54 (right) n=31793. }
		\label{fig:1}
	\end{figure}
	\begin{figure}[!ht]
		\centering
		\includegraphics[width=8cm, height=6.5cm]{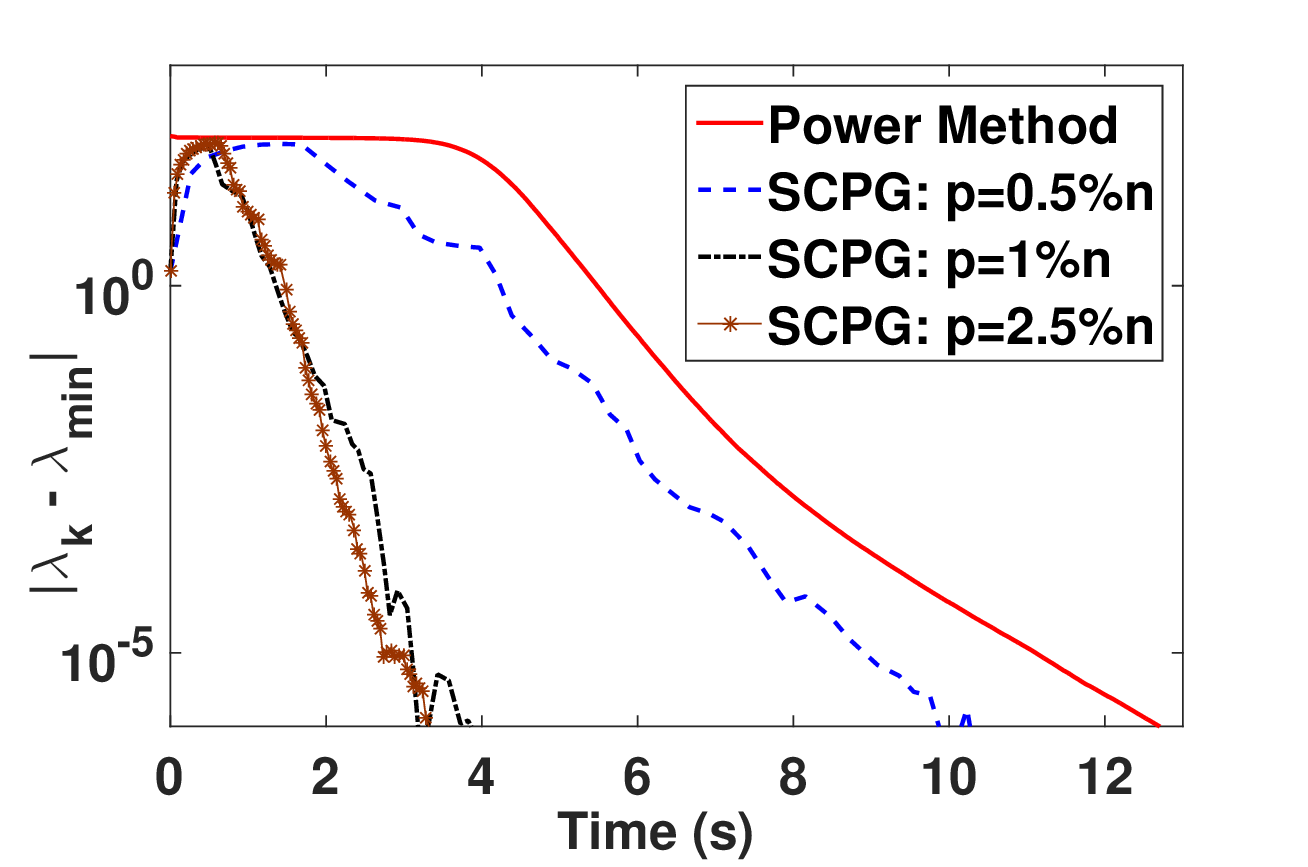}
		\hspace{-0.7cm}
		\includegraphics[width=8cm, height=6.5cm]{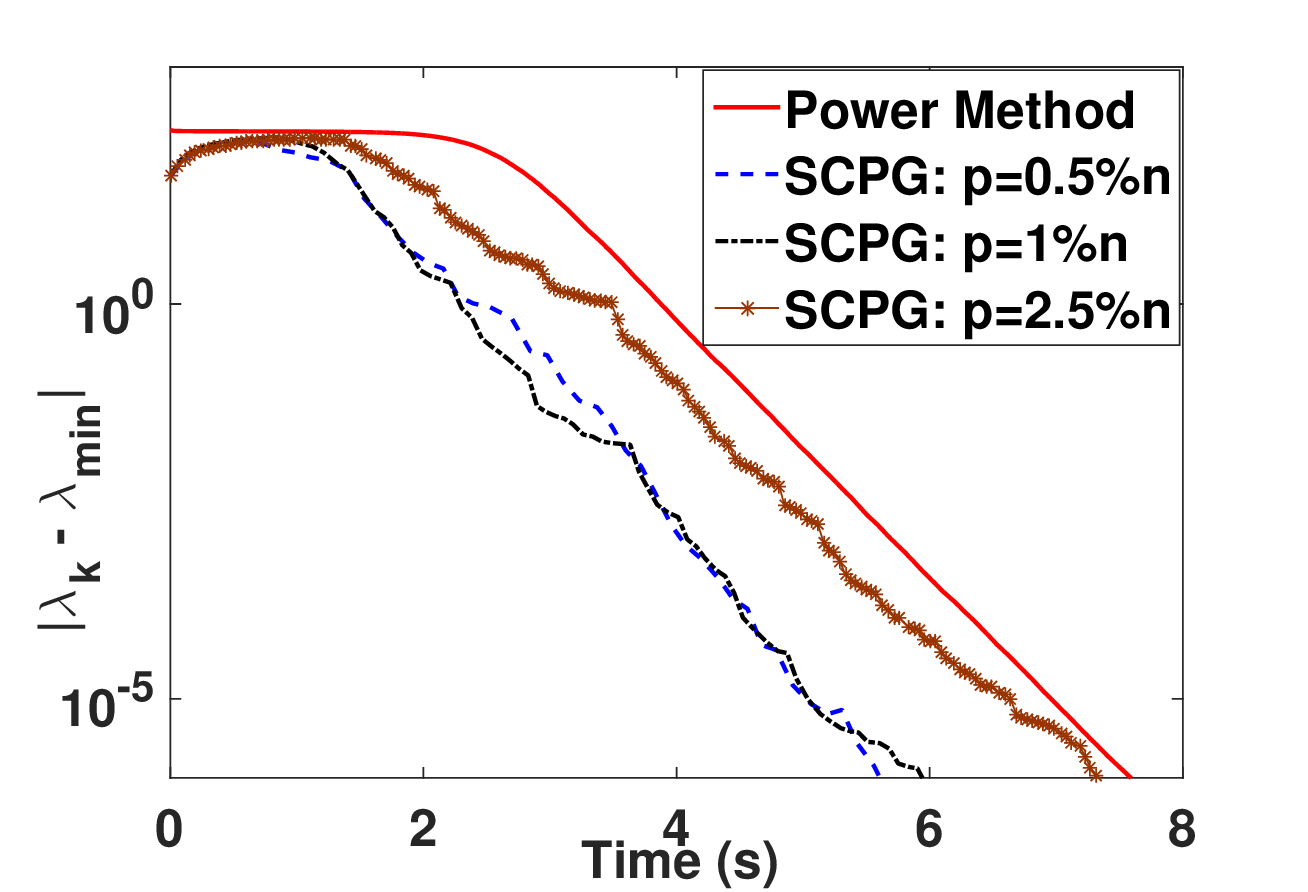} 
		\caption{Comparisson between SCPG and power method for the smalles eigenvalue on  group Schenk\underline{ }IBMNA.  Matrices: c-61 (left) n=43618  and c-65 (right) n=48066}
		\label{fig:2}
	\end{figure}


\subsection{Logistic regression with cubic regularization}
\noindent Many problems from statistics, control and machine learning can be formulated as a finite sum  optimization problem with a regularization term \cite{Mit:97}.  For example, given a  dataset $(y_t,z_t)_{t=1}^T$, where $z_t \in \mathbb{R}^n$ is the vector of  features and $y_t \in \{-1,1\}$ is the label of the $t$ data, we consider the following classification  problem:
	\begin{align*}
		\min_{x \in \mathbb{R}^n}  F(x) := \frac{1}{T} \sum_{t=1}^T \ell(x,y_t,z_t) +  \frac{M}{6} \|x\|^3, 
	\end{align*}
	where  $\ell (\cdot)$ is the  loss function (e.g., the logistic loss $ \ell(x,y_t,z_t) =  \log (1 + e^{a_t^T x})$ for a given data $a_t  = y_t z_t$). Note that usually the loss is regularized in order to enforce unique solution but also to avoid overfitting \cite{Mit:97}.  In this section we  consider a cubic regularization term, $M/6 \|x\|^3$, which also forces uniqueness of the solution as this is a uniformly convex function.  It is known that the logistic regression function is smooth.  In this set of experiments we generate a set of sparse normally distributed  vectors  $(z_t)_{t=1}^T \subset \mathbb{R}^n$ and $(y_t)_{t=1}^T \subseteq \{-1, 1\}$ and we compare the algorithm  SCPG and the full gradient algorithm proposed in \cite{Nes:19}. As in the previous  simulations, we also choose the matrix $U_{k}$ as a  scaled sampling matrix and  stop the  algorithms  when  $\|\nabla F(x) \| \leq \epsilon = 10^{-2}$.  The number of full iterations (number of gradient evaluations $\nabla f$) and CPU time in seconds are given in Table 2. From table we observe that when the problem is ill-conditioned (i.e., $M$ small)  our algorithm is at least $2$ times faster w.r.t. time than the method in   \cite{Nes:19}. 
	
	\begin{table}[h!]
		\centering 
		\begin{center}
			\begin{tabular}{ |p{0.7cm}| p{1cm}| p{1.2cm}| p{1cm}| p{1cm}|}
				\hline
				\multirow{1}{*}{M} & \multicolumn{4}{|c|}{$n = 10^3$, $T = 10^3$, $p=20$} \\
				\cline{2-5}
				& \multicolumn{2}{|c|}{SCPG} & \multicolumn{2}{|c|}{\cite{Nes:19}}\\
				\hline
				1 &  17 &  \textbf{32.6} & 2540 &35.2\\
				\hline
				0.1 &  21 & \textbf{98.1} & 1125 & 118.2\\
				\hline
				0.01  & 34 & \textbf{201.4} & 8357 & 372.9\\
				\hline
				\hline
				M & \multicolumn{4}{|c|}{$n = 10^3$, $T = 10^2$, $p=20$} \\
				\hline
				1 & 16 & \textbf{4.1} & 2370 & 4.6\\
				\hline
				0.1 & 25 & \textbf{11.8} & 1071 & 17.2\\
				\hline
				0.01 & 48 & \textbf{22.1} & 5435 & 46.3\\
				\hline
				\hline
				M & \multicolumn{4}{|c|}{$n= 10^4$, $T = 10^3$, $p=125$} \\
				\cline{2-5}
				1 &  15 & \textbf{881.2} & 2431 & 896.2\\
				\hline
				0.1 & 22 & \textbf{1040.7} & 11930 & \text{1687.3}\\
				\hline
				0.01 & 48 & \textbf{1605.1} & 17005 & 4723.2\\
				\hline
				\hline
				M & \multicolumn{4}{|c|}{$n = 10^4$, $T = 10^2$, $p=125$} \\
				\hline
				1 & 17 & \textbf{597.1} & 3695 & 609.7\\
				\hline
				0.1  & 27 & \textbf{1190.7} & 13575 & 1986.5\\
				\hline
				0.01  & 48 & \textbf{1928.5} & 15486 & 5057.1\\
				\hline
			\end{tabular}
		\end{center}
		\caption{Number of full iterations and CPU time (in seconds) for  SCPG and  algorithm in \cite{Nes:19} on logistic regression.} 
	\end{table}

\subsection{Logistic regression with quadratic regularization} \red{We also consider the  classification  problem from previous section, but now replacing  the loss with  $\ell(x,y_{t},z_{t}) = - \left[y_{t} \log\left(1/(1+e^{-z_{t}^{T}x})\right) + (1-y_{t})\log\left(e^{-z_{t}^{T}x}/(1+e^{-z_{t}^{T}x})\right)    \right] = \log (1 + e^{z_{t}^{T}x}) - y_{t}z_{t}^{T}x $ and the regularization term   with $\lambda/2 \|x\|^2$. We consider the duke breast-cancer  dataset from \cite{data} with $n=7129$, $T=44$ and $y_t \in \{0,1\}$.  We solve this problem with the cubic Newton algorithm \cite{NesPol:06}, where at each iteration we solve the corresponding subproblem \eqref{opt_prob} with the following algorithms:  SCPG,  the algorithm  proposed in \cite{CarDuc:17} with $\eta = 1/(8(4\|A\| + 2MR))$ and the random coordinate gradient descent algorithm (RCGD) with Armijo line-search (Algorithm 2.1 in \cite{Bon:21} with $\beta = \delta_{i} = 0.5$). Note that the last two algorithms treat  $\psi(x)=M/6 \|x\|^3$ as part of $f(x) =  \langle b, x \rangle + \frac{1}{2}   \langle Ax, x \rangle$, i.e., at each iteration they compute a (partial)  gradient  of $F$,  $\nabla F(x) = \nabla f(x) + \nabla \psi(x)$.  Each algorithm is stopped when $\|\nabla F(x_{k})\| \leq 10^{-3}$.  The results in function values along time (sec)  for solving the logistic regression problem using  duke-breast-cancer dataset with cubic Newton method using  SCPG, algorithm in \cite{CarDuc:17} and RCGD with Armijo  line-search in \cite{Bon:21} as subroutines are given in Figure \ref{fig:3}. We consider two cases for the  regularization parameter $\lambda$, $0.01$ (left) and $0.0001$ (right),  and  three values for  $p$ (the number of coordinates updated in each iteration in SCPG/RCGD).  We see again that SCPG provides the best performance in terms of CPU time (sec).  
}

\begin{figure}[!ht]
		\centering
		\includegraphics[width=8cm, height=6.5cm]{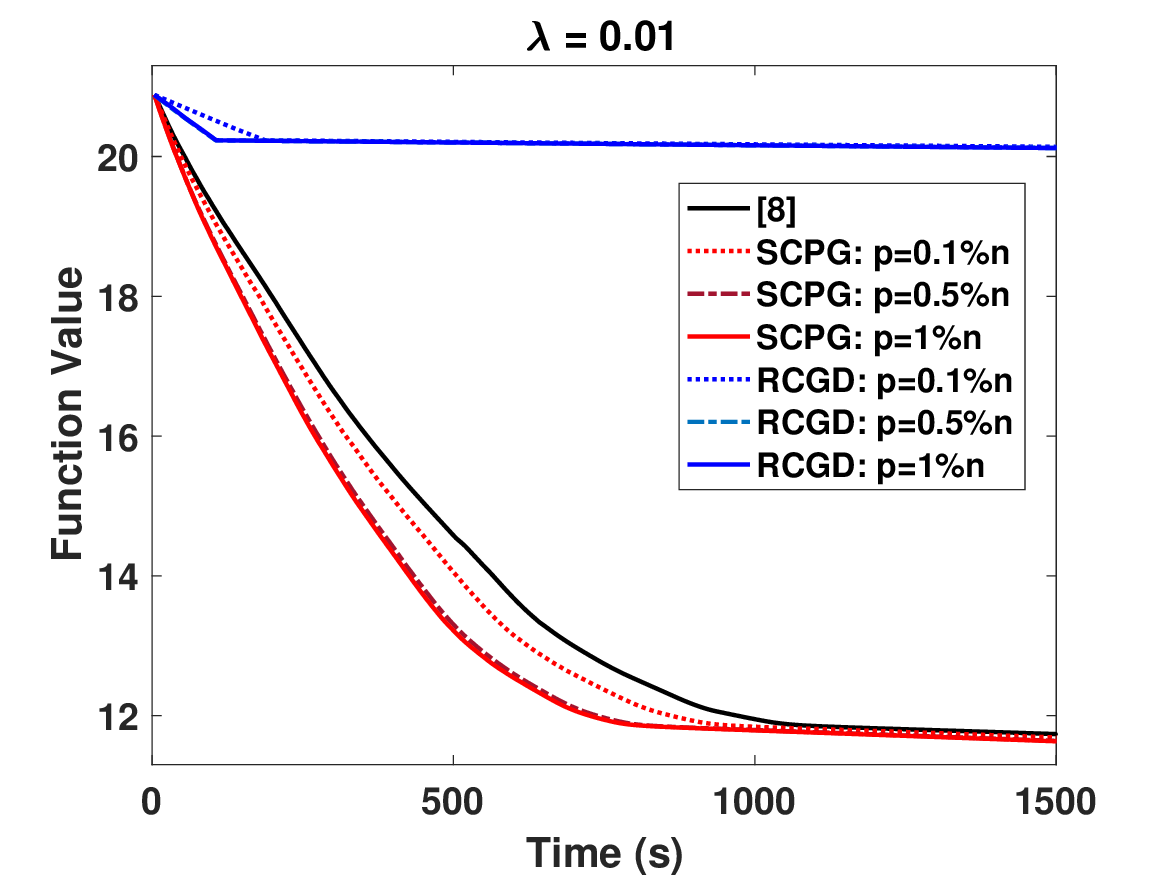}
       \includegraphics[width=8cm, height=6.5cm]{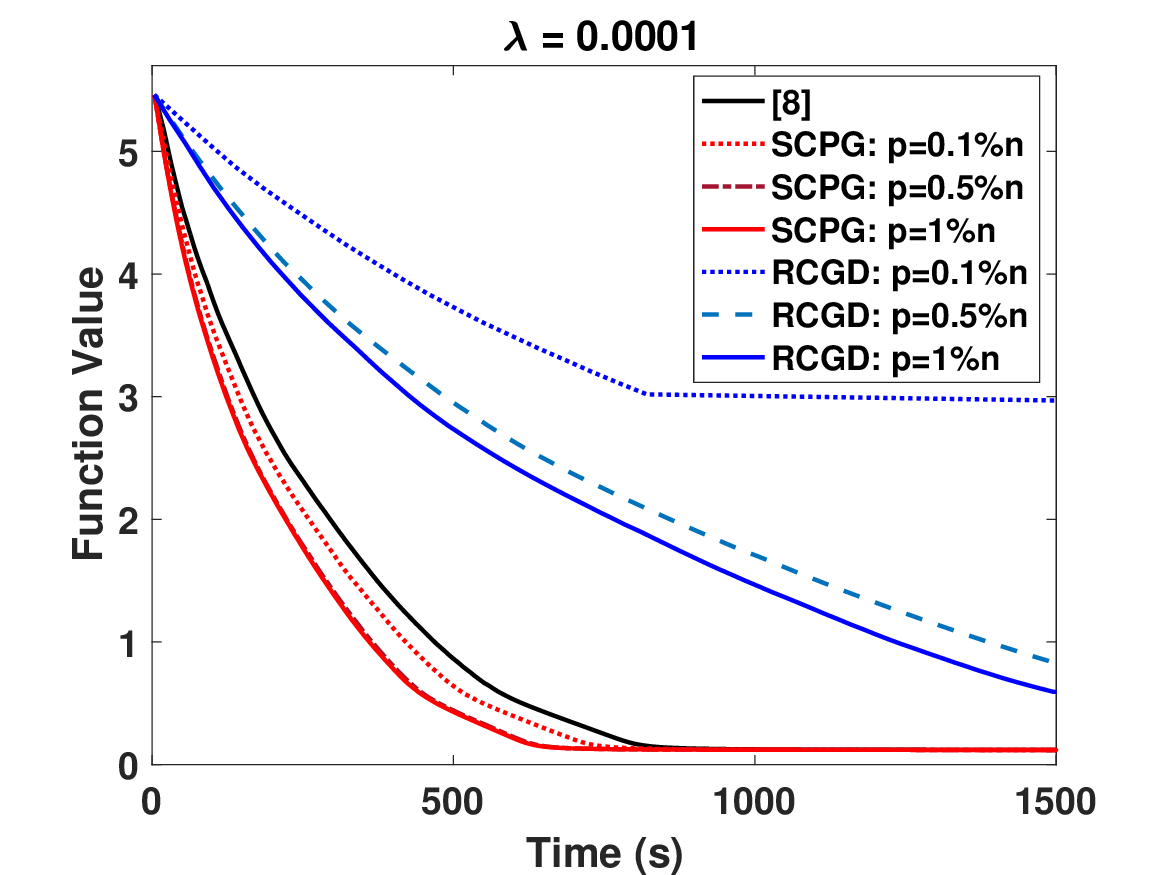}
		\caption{Solving logistic regression problem using  duke-breast-cancer dataset with cubic Newton using  SCPG, algorithm in \cite{CarDuc:17} and RCGD with Armijo  line-search in \cite{Bon:21} as subroutines: regularization parameter $\lambda$ is $0.01$ (left) and $0.0001$ (right), and three values for  $p$. }
		\label{fig:3}
\end{figure}


\section{Conclusions}
In this paper we have designed stochastic coordinate proximal gradient methods for composite  optimization problems having  the  objective function formed as a sum of two  terms, one smooth and the second   possibly nonconvex and nonseparable.  We have provided probabilistic worst-case complexity analysis for our stochastic  coordinate descent method in convex and nonconvex settings, in particular we have proved high-probability bounds on the number of iterations before a given optimality is achieved.  To the best of our knowledge, this work is the first proposing a pure stochastic  coordinate descent algorithm which is supported by global efficiency estimates for general classes of nonseparable  composite optimization problems.   The   numerical results on random and real data  have also  confirmed the  efficiency of our  algorithm.


 \begin{APPENDIX}{}
 	\label{apendix}
 	
 	\textit{Proof of Lemma \ref{lemma:1}}. From mean value theorem we have: 
 	\begin{equation*}
 		f(x+U h) - f(x) = \int_{0}^{1} \langle \nabla f(x+tUh), Uh \rangle dt.  
 	\end{equation*} 
 	\noindent If the function $f$ satisfies (\ref{lip1}), then it follows: 
 	\begin{align*}
 		& |f(x+U h) - f(x) - \langle \nabla f(x), Uh \rangle| = \left|\int_{0}^{1} \langle \nabla f(x+tUh) - \nabla f(x), Uh \rangle \, dt \right| \\
 		&\leq \int_{0}^{1} |\langle U^{T}(\nabla f(x+tUh) - \nabla f(x)), h \rangle| \, dt \\
 		& \leq \int_{0}^{1} \| U^{T}(\nabla f(x+tUh) - \nabla f(x))\| \|h\|  \, dt \\
 		&\leq  \int_{0}^{1} L_{U} \|th\| \|h\|  dt =  L_{U} \|h\|^{2} \int_{0}^{1} t \, dt = \frac{L_{U}}{2} \|h\|^2,
 	\end{align*}
 	which proves the statement. \Halmos
 	
 	
 	\medskip 
 	
 	\noindent \textit{Proof of Lemma \ref{lemma:2}}.  For any $x \in \mathbb{R}^{n}$ and $h \in \mathbb{R}^p$, we have: 
 	\begin{equation*}
 		U^{T} \int_{0}^{1} y \, dt = \int_{0}^{1} U^Ty \, dt  \qquad   \forall y \in \mathbb{R}^{n}.  
 	\end{equation*}
 	Therefore, we have:
 	\begin{equation*}
 		U^{T}(\nabla f(x + U h) - \nabla f(x)) = \int_{0}^{1} U^{T} \nabla^2 f(x+tUh)Uh \, dt. 
 	\end{equation*}	
 	\noindent If \eqref{eq:16} is satisfied, we get: 
 	\begin{align*}
 		& \|U^{T}(\nabla f(x + U h) - \nabla f(x))\| = \left\|\int_{0}^{1} U^{T} \nabla^2 f(x+tUh)Uh \, dt \right\| \\
 		& \leq \int_{0}^{1} \| U^{T} \nabla^2 f(x+tUh)Uh \| \, dt \leq  \int_{0}^{1} \| U^{T} \nabla^2 f(x+tUh)U\| \|h \| \, dt \\
 		&\leq \int_{0}^{1} L_{U} \|h \| \,  dt = L_{U} \|h \|.
 	\end{align*}
 	\noindent On the  other hand, if \eqref{lip1} is satisfied, then for any $x \in \mathbb{R}^{n}$, $h \in \mathbb{R}^{p}$ and $\alpha > 0$, we have:  
 	\begin{equation*}
 		\left\| \int_{0}^{\alpha} U^{T}\nabla^2f(x+tUh) Uh \, dt\right\| = \| U^{T}(\nabla f(x + \alpha U h) - \nabla f(x)) \| \leq \alpha L_{U} \|h\|.
 	\end{equation*}	
 	\noindent This implies that: 
 	\begin{equation*}
 		\left\| \dfrac{1}{\alpha} \int_{0}^{\alpha} U^{T}\nabla^2f(x+tUh) U \, dt \dfrac{h}{\|h\|}\right\| \leq L_{U}. 
 	\end{equation*}	
 	\noindent Taking $\alpha \to 0$, we get:  
 	\begin{equation*}
 		\left\| U^{T}\nabla^2f(x) U \dfrac{h}{\|h\|}\right\| \leq L_{U}. 
 	\end{equation*}	
 	\noindent From this and the definition of matrix norm  $\displaystyle \|A\| = \max_{\|h\| = 1} \|Ah\|$, we get \eqref{eq:16}.   \Halmos
 	
 	
 	\medskip 
 	
 		
 		
 		
 		
 	
 	
 		\textit{Proof of Lemma \ref{lem1}:}
 		First consider that $\psi$ is convex along the subspaces generated by matrices from $\mathcal{U}$. Using the optimality condition \eqref{optcond} for $d_{k}$ with the functions:
 	\begin{align*}
 		& \theta(d) = f(x_{k}) + \langle \nabla f(x_{k}), U_{k}d \rangle + \frac{\Hf }{2} \|d\|^2  \;  \text{and} \;   \phi(d) = \psi (x_{k} + U_{k}d), 
 	\end{align*}
 	we get:
 	\begin{align*}
 		\langle U^{T}_{k} \nabla f(x_{k}) + \Hf d_{k}, d - d_{k} \rangle + \psi (x_{k} + U_{k}d)  \geq \psi (x_{k} + U_{k}d_{k}) \quad \forall d \in \mathbb{R}^{p}.
 	\end{align*}
 	From the previous  optimality condition for $d = 0$, we further obtain:
 	\begin{align*}
 		- \langle U^{T}_{k} \nabla f(x_{k}) + \Hf d_{k}, d_{k} \rangle + \psi(x_{k}) \geq \psi(x_{k} + U_{k}d_{k}) = \psi(x_{k+1}).
 	\end{align*}
 	Using Assumption \ref{ass2} and \eqref{lip2}, we obtain:
 	\begin{align}
 		f(x_{k+1}) + \psi(x_{k+1}) \leq f(x_{k}) + \langle \nabla f(x_{k}), U_{k}d_{k} \rangle + \frac{L_{U_{k}}}{2} \|d_{k}\|^2 + \psi(x_{k+1}). \label{eq:68}
 	\end{align}
 	Then, combining both inequalities, we get:
 	\begin{align*}
 		& F(x_{k+1})  = f(x_{k+1}) + \psi(x_{k+1})\\ 
 		& \leq \! f(x_{k}) + \langle \nabla f(x_{k}), U_{k}d_{k} \rangle + \frac{L_{U_{k}}}{2} \|d_{k}\|^2 \!-\! \langle U^{T}_{k} \nabla f(x_{k}), d_{k} \rangle  \!-\! \Hf \|d_{k}\|^2 + \psi(x_{k}) \\
 		&= F(x_{k}) - \left(\Hf - \dfrac{L_{U_{k}}}{2}\right)  \|d_{k}\|^2. 
 	\end{align*}
 	
 	\noindent \red{Similarly, if $\psi$ is a general function, given that $d_k$ is the global optimum of subproblem \eqref{eq:subproblem}, we have that the objective function in \eqref{eq:subproblem} evaluated in $d_k$ is smaller than in $0$, i.e.:}
 	\begin{equation*}
 		\langle U^{T}_{k} \nabla f(x_{k}), d_{k} \rangle + \dfrac{\Hf}{2} \|d_{k}\|^2  + \psi(x_{k} + U_{k}d_{k}) \leq  \psi(x_{k}). 
 	\end{equation*} 
 	
 	\noindent Combining this with inequality \eqref{eq:68}, we get:
 	\begin{eqnarray*}
 		F(x_{k+1}) &\leq& f(x_{k}) + \langle \nabla f(x_{k}), U_{k}d_{k} \rangle + \frac{L_{U_{k}}}{2} \|d_{k}\|^2 + \psi(x_{k+1}). \\ 
 		&\leq& F(x_{k}) - \dfrac{1}{2}\left(\Hf-L_{U_{k}}\right) \|d_{k}\|^2 .
 	\end{eqnarray*}	
 \noindent This proves our statement. \Halmos

 	
      \medskip  
 	
 	\textit{Proof of Lemma \ref{lem2}:}
\red{Since $f$ and $\psi$ are differentiable, then from the  optimality conditions for $d_{k}$, i.e., $d_k$ is a stationary point of subproblem \eqref{eq:subproblem},  we have that:}
\begin{align}
	\label{oc3}
	U^{T}_{k} (\nabla f(x_{k}) + \nabla \psi(x_{k+1})) +\Hf d_{k}  = 0. 
\end{align}

\noindent Moreover,   $\nabla F(x_{k}) = \nabla f(x_{k}) + \nabla \psi(x_{k})$. Using \eqref{oc3} and Assumptions \ref{ass2} and \ref{ass1},  we get  for all $k \in \mathbb{A}_{K}$:
\begin{eqnarray}
	\alpha \|\nabla F(x_{k})\| &\leq& \|U^{T}_{k} \nabla F(x_{k}) \| = \|U^{T}_{k}(\nabla f(x_{k}) + \nabla \psi(x_{k})) \| \nonumber \\
	&=& \|U^{T}_{k}(\nabla \psi(x_{k}) - \nabla \psi(x_{k+1})) - \Hf d_{k} \|. \nonumber 
\end{eqnarray}	

{	\noindent Consider $u_{j} \in \mathbb{R}^{n}$ the $j$th column of $U_{k}$ and $u_{ij} \in \mathbb{R}$  the $i$th element in the $j$th column. From the last inequality, we get:    
	\begin{eqnarray}
		\alpha^2 \|\nabla F(x_{k})\|^2 &\leq&  \sum_{j = 1}^{p} | u_{j}^{T}  (\nabla \psi(x_{k}) - \nabla \psi(x_{k+1})) - \Hf d_{k}^{(j)}|^2 \nonumber \\
		&=&  \sum_{j = 1}^{p} \left| \sum_{i = 1}^{n}  u_{ij}  (\nabla_{i} \psi(x_{k}) - \nabla_{i} \psi(x_{k+1})) - \Hf e_{j}^{T} d_{k} \right|^2,  \label{eq:56}
	\end{eqnarray}
	
	\noindent with $e_{j}$ the $j$th vector of the canonical basis. By the mean value theorem, we have that there exist $n$ vectors $z_{i} \in [x_{k},x_{k+1}]$ such that: 
	\begin{equation*}
		\nabla_{i} \psi(x_{k+1})-	\nabla_{i} \psi(x_{k}) = \nabla_{i}^2\psi(z_{i})U_{k} d_{k}  \quad \text{for} \quad i=1:n.
	\end{equation*}
	
	\noindent Using the equality above in \eqref{eq:56}, we have:  
	\begin{eqnarray}
		\alpha^2 \|\nabla F(x_{k})\|^2 &\leq&  \sum_{j = 1}^{p}  \left|\sum_{i = 1}^{n}  u_{ij} \nabla_{i}^2\psi(z_{i})U_{k} d_{k}  + \Hf e_{j}^{T} d_{k} \right|^2.  
		\label{eq:57}
	\end{eqnarray}
	
	\noindent Considering \eqref{hess} and the Frobenius norm definition, inequality \eqref{eq:57} becomes:
	\begin{eqnarray}
		\alpha^2 \|\nabla F(x_{k})\|^2 &\leq&  \sum_{j = 1}^{p}  | \left( u_{j}^{T} \bar{\nabla}^2\Psi(z_{1},\cdots,z_{n}) U_{k}  + \Hf e_{j}^{T}\right) d_{k}  |^2 \nonumber \\
		&\leq&  \sum_{j = 1}^{p}  \|u_{j}^{T} \bar{\nabla}^2\Psi(z_{1},\cdots,z_{n}) U_{k}   + \Hf e_{j}^T\|^2 \|d_{k} \|^2\nonumber \\
		&=&  \|U_{k}^{T}\bar{\nabla}^2\Psi(z_{1},\cdots,z_{n})U_{k} + \Hf I_{p\times p} \|_{F}^{2} \, \|d_{k} \|^2.  \nonumber
\end{eqnarray}}

\noindent Using \eqref{Hpsi} the  statement follows. \Halmos
 	
 	
 	\medskip 
 	
 		\textit{Proof of Lemma \ref{lem3}:}
 		Let $T_{i}$ be the indicator function for the event: \textit{$U_{i}$ is well-aligned}. Then,  $|\mathbb{A}_{K}| = \sum_{i = 0}^{K} T_{i}$. Since $T_{i} \in \{0,1\}$, denoting  $p_{i} := \mathbb{P}[T_{i} = 1|x_{i}]$, then for any $t>0$ and  $i=0, 1, \cdots,K$, we have:  
 		\begin{eqnarray*}
 			\mathbb{E}\left[e^{-t(T_{i} - p_{i})}|x_{i}\right] &=& p_{i} e^{-t(1-p_{i})} + (1-p_{i})e^{tp_{i}}  = e^{tp_{i}}(1-p_{i}+p_{i}e^{-t}) \\
 			&=& e^{tp_{i}}e^{\log(1-p_{i}+p_{i}e^{-t})} =  e^{tp_{i} + \log(1-p_{i}+p_{i}e^{-t})} \leq e^{\frac{t^2p_{i}}{2}}, \label{eq:2}
 		\end{eqnarray*}
 		where the inequality follows from the relation: 
 		\begin{align}
 			\label{eq:ineq}
 			py + \log(1-p + pe^{-y}) \leq \frac{y^2p}{2} \quad  \forall p \in [0,1], \; y \geq 0.
 		\end{align}  
 
 \noindent Indeed, let us denote $\zeta(x) = px + \log(1-p + pe^{-x})$ and $\eta(x) = \dfrac{px^{2}}{2}$. We have $\zeta(0) = \eta(0) = 0$ and also the relation:  
 	\begin{equation*}
 		\zeta'(x) = p - \dfrac{pe^{-x}}{1-p+pe^{-x}}, \;\;  \eta'(x) = px. 
 	\end{equation*}
 	
 	\noindent Hence $\zeta'(0) = \eta'(0) = 0$. Moreover, $\eta''(x) = p$ and the relation:  
 	\begin{equation}
 		\zeta''(x) = \dfrac{(1-p)pe^{-x}}{(1-p + pe^{-x})^{2}}. 
 		\label{eq:8}
 	\end{equation}
 	
 	\noindent We further  have:  
 	\begin{align*}
 		&  0 \leq \left( (1-p) - e^{-x}\right)^2 \iff 0 \leq (1-p)^2 - 2(1-p)e^{-x} + (e^{-x})^2, \\
 		& \iff 4(1-p)e^{-x} \leq (1-p)^2 + 2(1-p)e^{-x} + (e^{-x})^2 = \left( 1-p + e^{-x}\right)^2, \\
 		& \iff 2 \sqrt{(1-p)e^{-x}} \leq 1-p + e^{-x}, \\
 		& \iff 2 \sqrt{(1-p)e^{-x}} - (1-p)e^{-x}  \leq 1-p + pe^{-x}.
 	\end{align*}
 	
 	\noindent Using this in \eqref{eq:8}, we obtain:
 	\begin{equation*}
 		\zeta''(x) \leq \dfrac{(1-p)pe^{-x}}{\left(2 \sqrt{(1-p)e^{-x}} - (1-p)e^{-x}\right)^2}.
 	\end{equation*}
 	
 	\noindent Since $x \geq 0$ and  $0\leq p \leq 1$, we have $(1-p)e^{-x} \geq 0$ and thus: 
 	\begin{equation*}
 		\sqrt{(1-p)e^{-x}} \leq (1-p)e^{-x} \iff \dfrac{1}{(1-p)e^{-x}} \geq \dfrac{1}{\sqrt{(1-p)e^{-x}}}.
 	\end{equation*} 
 	
 	\noindent Finally, we get:
 	\begin{eqnarray*}
 		\zeta''(x) & \leq &  \dfrac{p(1-p)e^{-x}}{\left(2 \sqrt{(1-p)e^{-x}} - \sqrt{(1-p)e^{-x}}\right)^2} \\ 
 		&=&  \dfrac{p(1-p)e^{-x}}{\left(\sqrt{(1-p)e^{-x}}\right)^2} = p = \eta''(x).
 	\end{eqnarray*}
 	
 	\noindent  Since $\eta''(x) - \zeta''(x) \geq 0$ for all  $x \geq 0$, it follows that  the function $h(x) = \eta(x) - \zeta(x)$ is convex  in the set  $\mathbb{B} = \{x: \;  x \geq 0\}$.  Moreover, since $h'(0) = 0$, then $0$ is a minimizer of $h$ over  $\mathbb{B}$. Since $h(0) = 0$, we have $0 \leq  h(x)$ for $x \in \mathbb{B}$. Hence,   $\zeta(x) \leq \eta(x)$, i.e., \eqref{eq:ineq} holds.
 	 Using basic properties of conditional expectations and the fact that $T_{i}$ only depends on $x_{i}$ and not on the previous iterations (see  Assumptions  \ref{ass1}), we then get: 
 		\begin{eqnarray}
 			\mathbb{E}\left[e^{-t\left(|\mathbb{A}_{K}| - \sum_{i=0}^{K} p_{i}\right) }\right] &=& \mathbb{E}\left[e^{-t\sum_{i=0}^{K}(T_{i}-p_{i})}\right] \nonumber \\
 			&=&  \mathbb{E} \left[e^{-t\sum_{i=0}^{K-1}(T_{i}-p_{i})} \mathbb{E}\left[e^{-t(T_{K} - p_{K})}|x_{K}\right] \right] \nonumber \\
 			&\leq& e^\frac{{t^2p_{K}}}{2} \mathbb{E}\left[e^{-t \sum_{i=0}^{K-1}(T_{i} - p_{i})}\right] \label{eq:3}  \\
 			&=& e^\frac{{t^2p_{K}}}{2} \mathbb{E}\left[e^{-t \sum_{i=0}^{K-2}(T_{i} - p_{i})} \mathbb{E}\left[e^{-t(T_{K-1} - p_{K-1})}|x_{K-1}\right]\right] \nonumber \\
 			&\leq& e^\frac{{t^2p_{K}}}{2} e^\frac{{t^2p_{K-1}}}{2}  \mathbb{E}\left[e^{-t \sum_{i=0}^{K-2}(T_{i} - p_{i})}\right] \leq e^{\frac{t^2}{2} \left(\sum_{i=0}^{K}p_{i}\right)}. \nonumber  
 		\end{eqnarray} 
 		
 		\noindent By Markov inequality and \eqref{eq:3}, we have:
 		\begin{eqnarray}
 			\mathbb{P}\left[e^{-t\left(|\mathbb{A}_{K}| - \sum_{i=0}^{K} p_{i}\right)} \geq e^{t \lambda}\right] &\leq& e^{-t\lambda} \cdot  \mathbb{E}\left[e^{-t\left(|\mathbb{A}_{K}| - \sum_{i=0}^{K} p_{i}\right) }\right] \nonumber \\
 			&\leq& e^{-t\lambda} \cdot e^{\frac{t^2}{2} \left(\sum_{i=0}^{K}p_{i}\right)} = e^{\frac{t^2}{2} \left(\sum_{i=0}^{K}p_{i}\right)-t\lambda}. \label{eq:4}
 		\end{eqnarray}
 		
 		\noindent On the  other hand,  one can  note that:  
 		\begin{eqnarray*}
 			|\mathbb{A}_{K}| \leq \sum_{i=0}^{K} p_{i} - \lambda & \iff  & t\lambda \leq -t|\mathbb{A}_{K}| + t \sum_{i=0}^{K} p_{i}  \iff  e^{-t\left( |\mathbb{A}_{K}| - \sum_{i=0}^{K} p_{i}\right) } \geq e^{t \lambda}. 
 		\end{eqnarray*}
 		
 		\noindent Hence, we get: 
 		\begin{eqnarray}
 			\mathbb{P}\left[|\mathbb{A}_{K}| \leq \sum_{i=0}^{K} p_{i} - \lambda\right] = \mathbb{P}\left[e^{-t\left(|\mathbb{A}_{K}| - \sum_{i=0}^{K} p_{i}\right)} \geq e^{t \lambda}\right]. \label{eq:5}
 		\end{eqnarray}
 		
 		\noindent Combining \eqref{eq:4} and \eqref{eq:5}, we further get: 
 		\begin{equation*}
 			\mathbb{P}\left[|\mathbb{A}_{K}| \leq \sum_{i=0}^{K} p_{i} - \lambda\right] \leq e^{\frac{t^2}{2} \left(\sum_{i=0}^{K}p_{i}\right)-t\lambda}.
 		\end{equation*}
 		
 		\noindent Taking $\displaystyle t = \frac{\lambda}{\sum_{i=0}^{K}p_{i}}$, we obtain:
 		\begin{equation*}
 			\mathbb{P}\left[|\mathbb{A}_{K}| \leq \sum_{i=0}^{K} p_{i} - \lambda\right] \leq e^{-\frac{\lambda^2}{2 \left( \sum_{i=0}^{K}p_{i}\right) }}.
 		\end{equation*}
 		
 		\noindent Finally,  taking $\displaystyle \lambda = \beta \sum_{i=0}^{K} p_{i}$ for some $\beta \in (0,1)$, we have:
 		\begin{equation*}
 			\mathbb{P}\left[|\mathbb{A}_{K}| \leq (1- \beta)\sum_{i=0}^{K} p_{i} \right] \leq e^{-\frac{\beta^2\left( \sum_{i=0}^{K}p_{i}\right) }{2}}. 
 		\end{equation*}
 		
 		\noindent and consequently 
 		\begin{equation}
 			\mathbb{P}\left[|\mathbb{A}_{K}| \geq (1- \beta)\sum_{i=0}^{K} p_{i} \right] \geq 1 - e^{-\frac{\beta^2\left( \sum_{i=0}^{K}p_{i}\right) }{2}}. \label{eq:6}
 		\end{equation}
 		
 		\noindent From Assumption \ref{ass1},  $p_{i} \geq (1-\delta)$ for all $i=0, 1, \cdots,K$. Therefore, we get: 
 		\begin{equation*}
 			(1-\beta) \sum_{i=0}^{K} p_{i} \geq (1-\beta)(1-\delta)(K+1).
 		\end{equation*}
 		
 		\noindent Since the event $\{|\mathbb{A}_{K}| \geq (1- \beta)\sum_{i=0}^{K} p_{i}\}$  is contained in the event $\{|\mathbb{A}_{K}| \geq (1-\beta)(1-\delta)(K+1)\}$, from  \eqref{eq:6} we obtain: 
 		\begin{align*}
 			\mathbb{P}\left[|\mathbb{A}_{K}| \geq (1-\beta)(1-\delta)(K+1)\right]  \geq \mathbb{P}\left[|\mathbb{A}_{K}| \geq (1- \beta)\sum_{i=0}^{K} p_{i} \right]  \geq 1-e^{-\frac{\beta^2\left( \sum_{i=0}^{K}p_{i}\right) }{2}}.
 		\end{align*}
 		
 		\noindent Since we also have $\displaystyle \sum_{i=0}^{K} p_{i} \geq (1-\delta)(K+1)$, we get our statement \eqref{eq:7}. 
 		\Halmos
 		
 		
 		\medskip

 		\textit{Proof of Lemma \ref{LemmaKL}:}
 			By Lemma \ref{lem1}, we have $F(x_{k}) \leq F(x_{0})$ for all $k \geq 0$.  Since $\mathcal{L}_{f}(x_{0})$ is assumed bounded, then the  sequence $(x_{k})_{k\geq 0}$ is also bounded. This implies that the set $\mathcal{X}(x_{0})$ is also bounded. Closeness of $\mathcal{X}(x_{0})$ also follows observing that $\mathcal{X}(x_{0})$ can be viewed as an intersection of closed sets, i.e., $\mathcal{X}(x_{0}) = \cap_{j \geq 0} \cup_{\ell \geq j} \{x_{k_{\ell}}\}$. Hence $\mathcal{X}(x_{0})$  is a compact set.   Further, using the boundness of $(x_{k})_{k\geq 0}$ and the continuity  of $F$ and $\nabla F$, we have that the sequences $\left( F(x_{k})\right) _{k\geq 0}$ and $\left( \| \nabla F(x_{k})\|^2 \right) _{k\geq 0}$ are also bounded.   From Lemma \ref{lem1}, we have: 
 		\begin{equation*}
 			\sum_{k=0}^{K} \|d_{k}\|^{2} < \infty \quad  \text{a.s.},   \;\; \text{hence} \;  \;  \| d_k \|  \to 0 \quad  \text{a.s.} 
 		\end{equation*}
 		Hence, by Lemma \ref{lem2} we have:
 		\begin{equation}
 			\|\nabla F(x_{k_{j}})\|  \to  0 \quad  \text{a.s.}   \label{eq:58}
 		\end{equation}
 		
 		\noindent Moreover from Lemma \ref{lem1} we have that $F(x_{k})_{k\geq 0}$ is monotonically decreasing and since $F$ is assumed bounded from below by $F^{*} > - \infty$, it converges, let us say to $F_{*} > - \infty$, i.e. $F(x_{k}) \to F_{*}$ a.s., and   $F_{*}  \geq  F^{*}$.
 		On the other hand, let $\bar{x}$ be a limit point of $(x_{k_{j}})_{j\geq 0}$, i.e. $\bar{x} \in \mathcal{X}(x_{0})$. This means that there is a subsequence $(x_{\bar{k}})_{\bar{k}\geq 0}$ of $(x_{k_{j}})_{j\geq 0}$
 		such that $x_{\bar{k}} \to \bar{x}$ a.s.  Since  $f$ and $\psi$ are continuous, it follows that $F(x_{\bar{k}}) \inas F(\bar{x})$. This implies that $F(\bar{x}) = F_{*}$. Finally,  to prove that $\nabla F(\mathcal{X}(x_{0})) = 0$, one can notice that since $x_{\bar{k}} \to \bar{x}$, then  $\nabla F(x_{\bar{k}}) \to \nabla F(\bar{x})$ a.s.  Using basic probabilistic arguments,  by \eqref{eq:58}, we get $\|\nabla F(\bar{x})\| = 0$ a.s. 
 		\Halmos

 		
        \medskip 
        	
 	\textit{Proof of Lemma \ref{lemma:rec}:}   For the  recurrence $\Delta_{k} - \Delta_{k+1} \geq \Delta_{k+1}^{\zeta+1}$, with $\zeta \in (0,1]$, a sublinear bound was derived in \cite{Nes:19}. We extend this result to more general $\zeta$ and to a  different recurrence  in the form \eqref{eq:78}.  Consider $c=1$ and  $\zeta > 0$. Multiplying \eqref{eq:78} by $ \Delta_{k}^{-(\zeta+1)} $ we get: 
 	\begin{equation}
 		1 \leq \left( \Delta_{k}^{-\zeta} - \dfrac{\Delta_{k+1}}{\Delta_{k}^{1+\zeta}} \right) = \left( 1 - \dfrac{\Delta_{k+1}}{\Delta_{k}}\right) \Delta_{k}^{-\zeta}. \label{eq:82}
 	\end{equation}
 	
 	\noindent Let us show that the function $g(y) = y^{-\zeta} + \zeta y - (1+\zeta)$ satisfies  $g(y) \geq 0$ for all $y \in (0,1]$ and $\zeta>0$. Indeed, since: 
 	\begin{equation*}
 		g'(y)  = -\zeta y^{-(\zeta + 1)} + \zeta \leq 0  \quad  \forall y \in (0,1],  
 	\end{equation*}
 	it follows that $g(y)$ is decreasing on $(0,1]$. Moreover, $g(1) = 0$ and also
 	\begin{equation*}
 		\lim_{y \searrow 0} g(y) = \lim_{y \searrow 0} \left(\dfrac{1}{y^{\zeta}} + \zeta y - (1-\zeta)\right) = + \infty.   
 	\end{equation*}
 	
 	\noindent Therefore,  $g(y) \geq 0$ for all $y \in (0,1]$ and $\zeta>0$. Since $\dfrac{\Delta_{k+1}}{\Delta_{k}} \in (0,1]$, we get:
 	\begin{eqnarray*}
 		\left( \dfrac{\Delta_{k+1}}{\Delta_{k}}\right) ^{-\zeta} + \zeta \left( \dfrac{\Delta_{k+1}}{\Delta_{k}}\right)  -1-\zeta \geq 0, 
 	\end{eqnarray*}
 	or equivalently
 	\begin{eqnarray*}
 		\dfrac{1}{\zeta}\left[\left(\dfrac{\Delta_{k+1}}{\Delta_{k}}\right)^{-\zeta} - 1 \right] \geq \left( 1 - \dfrac{\Delta_{k+1}}{\Delta_{k}}\right). 
 	\end{eqnarray*}
 	\noindent Combining the last inequality with  \eqref{eq:82}, we obtain: 
 	\begin{equation*}
 		1 \leq \dfrac{\Delta_{k}^{-\zeta}}{\zeta}\left[ \left(\dfrac{\Delta_{k+1}}{\Delta_{k}}\right)^{-\zeta} - 1 \right] = \dfrac{\Delta_{k+1}^{-\zeta} - \Delta_{k}^{-\zeta}}{\zeta}. 
 	\end{equation*}
 	\noindent Summing this relation, we get:
 	\begin{equation*}
 		k\zeta \leq \sum_{i = 0}^{k-1} \left( \Delta_{i+1}^{-\zeta} - \Delta_{i}^{-\zeta} \right)  = \Delta_{k}^{-\zeta} -  \Delta_{0}^{-\zeta},  
 	\end{equation*}
 	and by arranging the terms we finally obtain:  
 	\begin{equation*}
 		\Delta_{k}^{\zeta} \leq  \dfrac{\Delta_{0}^{\zeta}}{\zeta k\Delta_{0}^{\zeta} + 1}, 
 	\end{equation*}
 	thus proving  \eqref{eq:61}.  If $\zeta=0$ and $c\in(0,1)$, we get:
 	\begin{align*}
 		&  \Delta_{k+1} \leq \left(1 - c \right) \Delta_{k},
 	\end{align*}	
 	\noindent thus proving \eqref{eq:62}.  Finally, for  $c>0$ and $-1 < \zeta < 0$, we have $ \Delta_{k+1} \leq  \Delta_{k}$ and consequently \eqref{eq:78} leads to the following recurrence: 
 	\begin{equation*}
 		\Delta_{k} - \Delta_{k+1} \geq c \Delta_{k+1}^{\zeta+1} \quad \forall k \geq 0. 
 	\end{equation*}
 	Rearranging the terms we get: 
 	\begin{equation*}
 		\Delta_{k+1}\left(1 + c \Delta_{k+1}^{\zeta}\right) \leq \Delta_{k} \iff \Delta_{k+1} \leq \left(\dfrac{1}{1 + c \Delta_{k+1}^{\zeta}}\right) \Delta_{k}, 
 	\end{equation*}
 	which yields 	\eqref{eq:63}. Note that $\Delta_{k+1}^\zeta \to \infty$, provided that $\zeta<0$.  These prove  our statements. \Halmos
 
\end{APPENDIX}


\section*{Acknowledgments.}
\noindent The research leading to these results has received funding from: ITN-ETN project TraDE-OPT funded by the European Union’s Horizon 2020 Research and Innovation Programme under the Marie Skłodowska-Curie grant agreement No.  861137; NO Grants 2014-2021, under project ELO-Hyp,  no. 24/2020;  UEFISCDI PN-III-P4-PCE-2021-0720, under project L2O-MOC, nr.  70/2022.


\end{document}